\documentclass{amsart}

\usepackage[dvips]{graphicx}
\usepackage{latexsym,amsfonts} 
\usepackage{amsmath,amssymb,graphics,setspace}
\usepackage{amsthm}
\usepackage{mathrsfs} 

\usepackage{graphics}
\usepackage{paralist}

%%%%%%%%%%%%%%%%%%%%%%%%%%%%%%%%%%%

%\usepackage[utf8]{inputenc}
%%\usepackage[T2A]{fontenc}
%\usepackage[ukrainian,russian,english]{babel}

%%%%%%%%%%%%%%%%%%%%%%%%%%%%%%%%%%%

%\usepackage[english]{conference}

%\usepackage{amssymb}
%\usepackage{graphicx}
%\usepackage{xcolor}
%\usepackage[all]{xy}
%
%\usepackage{graphics}
%\usepackage{epsfig}
%\usepackage{amsmath}
%\usepackage{amssymb,fancyhdr,txfonts,pxfonts}

\usepackage{marvosym}

\usepackage{pstricks,pst-text,pst-grad,pst-node,pstricks-add,pst-poly,pst-coil,pst-arrow} 
\usepackage{pstricks,pst-3dplot,graphicx}

\usepackage{hyperref}
\hypersetup{linktocpage=true,colorlinks=true,linkcolor=blue,citecolor=blue,pdfstartview={XYZ 1000 1000 1}}

\newcommand{\assign}{\mathrel{\mathop :}=}
\newcommand{\vor}{\mbox{vor}} % vortex
\newcommand{\Nrv}{\mbox{Nrv}} % nerve complex
\newcommand{\abs}[1]{\left|#1\right|}     %usage: \abs{x} yields |x|.
\newcommand{\cl}{\mbox{c$\ell$}}  %needed, since $cl(A)$ treats cl as a variable.
\newcommand{\Int}{\mbox{int}} %needed, since $int(A)$ treats int as a variable.
\newcommand{\bdy}{\mbox{bdy}} %needed, since $bdy(A)$ treats bdy as a variable.
\newcommand{\cyc}{\mbox{cyc}} %needed, since $cyc A$ treats cyc as a variable.
\newcommand{\hcyc}{\mbox{hCyc}}
\newcommand{\hvor}{\mbox{hVor}}
\newcommand{\bhcyc}{\mbox{bhCyc}}
\newcommand{\hNrv}{\mbox{hNrv}}
\newcommand{\vorNrv}{\mbox{vorNrv}}
\newcommand{\hvorNrv}{\mbox{hVorNrv}}
\newcommand{\br}{\mbox{br}}
\newcommand{\hbr}{\mbox{hBr}}
\newcommand{\sh}{\mbox{sh}} % shape

\theoremstyle{plain}
\newtheorem{theorem}{Theorem}
\newtheorem{lemma}{Lemma}
\newtheorem{remark}{Remark}
\newtheorem{definition}{Definition}

\newtheorem{example}{Example}

\usepackage{appendix}

\begin{document}

%===============================================================================
%   Front matter --- title, etc.
%===============================================================================

%\author{James F. Peters}
%\email{james.peters3@umanitoba.ca}
%\address{University of Manitoba, WPG, MB, R3T 5V6, Canada}

\title[Homotopic Nerve Complexes]{Homotopic Nerve Complexes with\\ Free Group Presentations}

\author[J.F. Peters]{J.F. Peters}
\address{
Computational Intelligence Laboratory,
University of Manitoba, WPG, MB, R3T 5V6, Canada and
Department of Mathematics, Faculty of Arts and Sciences, Ad\.{i}yaman University, 02040 Ad\.{i}yaman, Turkey,
}
\email{james.peters3@umanitoba.ca}

\thanks{The research has been supported by the Natural Sciences \&
Engineering Research Council of Canada (NSERC) discovery grant 185986 
and Instituto Nazionale di Alta Matematica (INdAM) Francesco Severi, Gruppo Nazionale per le Strutture Algebriche, Geometriche e Loro Applicazioni grant 9 920160 000362, n.prot U 2016/000036 and Scientific and Technological Research Council of Turkey (T\"{U}B\.{I}TAK) Scientific Human
Resources Development (BIDEB) under grant no: 2221-1059B211301223.}

\subjclass[2010]{55P57 (homotopy theory); 55U10 (Simplicial sets and complexes); 57M05 (Free Group Presentations)}

\date{}

\dedicatory{Dedicated to Yuriy Trokhymchuk \& Saroja V. Banavar}

\begin{abstract}
This paper introduces homotopic nerve complexes in a planar Whitehead CW space and their Rotman free group presentations.
Nerve complexes were introduced by P.S. Alexandrov during the 1930s and recently given a formal structure from a computational topology perspective by H. Edelsbrunner and J.L. Harer in 2010.  A homotopic nerve results from the nonvoid intersection of a collection of homotopic 1-cycles.  Briefly, a 1-cycle is a finite sequence of path-connected vertexes with no end vertex and with a nonvoid interior. A homotopic 1-cycle has the structure of a 1-cycle in a CW space in which cycle edges are replaced by homotopic maps.  A group $G(V,+)$ containing a basis $\mathcal{B}$ is {\em free}, provided every member of $V$ can be written as a linear combination of elements (generators) of the basis $\mathcal{B}\subset V$.  Let $\bigtriangleup$ be the members $v$ of $V$, each written as a linear combination of the basis elements of $\mathcal{B}$.  A presentation of $G(V,+)$ is a mapping $\mathcal{B}\times \bigtriangleup\to G(\left\{v\in V\assign\sum_{k\in \mathbb{Z}\atop
g\in \mathcal{B}}{kg}\right\},+)$.  The main results in this paper are (1) Every homotopic vortex nerve has a free group presentation and (2) For a vortex nerve that consists of a finite collection of closed, convex sets in Euclidean space, the nerve and union of sets in the nerve have the same homotopy type. 
\end{abstract}
\keywords{1-Cycle, Free Group, Homotopy, Nerve, Presentation}
\maketitle
\tableofcontents

\maketitle

\begin{figure}[!ht]
%\centering
%\begin{pspicture}
%%[showgrid=true]
%(-3.5,-1.0)(4.0,4.0)
\centering
\begin{pspicture}
%[showgrid=true]
(-1.0,-1.0)(3.5,4.1)
\centering
\psframe[linewidth=0.75pt,linearc=0.25,cornersize=absolute,linecolor=blue](-2.0,-1.0)(11.0,4.2)
\psline[linewidth=0.5pt,linecolor=blue]%
(-1.0,2)(-0.5,3)
\psline[linewidth=0.5pt,linecolor=blue]%
(-0.5,3)(0.5,3.5)
\psline[linewidth=0.5pt,linecolor=blue]%
(0.5,3.5)(1.5,3.0)
\psline*[linecolor=gray!20]% h_1(0)
(1.5,3.0)(2.0,2.0)(0.5,1.5)
\psline[linewidth=0.5pt,linecolor=blue]%
(1.5,3.0)(2.0,2.0)
\psdots[dotstyle=o,dotsize=2.5pt,linewidth=1.2pt,linecolor=black,fillcolor=red!80](0.5,1.5)
\psline[linewidth=0.5pt,linecolor=blue]%
(2.0,1.0)(1.5,0.0)
\psline[linewidth=0.5pt,linecolor=blue]%
(1.5,0.0)(0.5,-0.5)
%\psline[linewidth=0.5pt,linecolor=blue,arrowscale=1.0]{-v}%
%(0.5,-0.5)(-0.5,0.0)
\psline[linewidth=0.5pt,linecolor=blue]%
(-0.5,0.0)(-1.0,1.0)
\psline[linewidth=0.5pt,linecolor=blue]%
(-1.0,1.0)(-1.0,2)
\psdots[dotstyle=o,dotsize=2.5pt,linewidth=1.2pt,linecolor=black,fillcolor=red!80]% shared boundary vertex
(-1.0,2)(-0.5,3)(0.5,3.5)(1.5,3.0)(2.0,2.0)(2.0,1.0)(1.5,0.0)%
(0.5,-0.5)(-0.5,0.0)(-1.0,1.0)(-1.0,1.0)(-1.0,2)
\rput(-0.7,2.0){\footnotesize $\boldsymbol{v_1}$}
\rput(-0.7,1.0){\footnotesize $\boldsymbol{v_0}$}
\psdots[dotstyle=o,dotsize=1.5pt,linewidth=1.2pt,linecolor=black,fillcolor=red!80]% shared boundary vertex
(2.0,1.7)(2.0,1.5)(2.0,1.3)
\psdots[dotstyle=o,dotsize=1.5pt,linewidth=1.2pt,linecolor=black,fillcolor=red!80]% shared boundary vertex
(0.2,-0.4)(0.0,-0.3)(-0.2,-0.2)
\rput(-1.5,3.85){\footnotesize $\boldsymbol{K}$}
\rput(0.5,3.7){\footnotesize $
\boldsymbol{v_3}$}
\rput(1.8,1.0){\footnotesize $
\boldsymbol{v_i}$}
\rput(-0.1,0.2){\footnotesize $
\boldsymbol{v_{(n-1)[n]}}$}
\rput(1.2,0.2){\footnotesize $
\boldsymbol{v_{i+1}}$}
\rput(0.5,2.5){$
\boldsymbol{\cyc E}$}
\rput(2.5,1.5){\large $
\boldsymbol{\to}$}
\end{pspicture}\hfil
\begin{pspicture}
%[showgrid=true]
(-3.0,-1.0)(3.0,4.0)
\centering
%\psframe[linewidth=0.75pt,linearc=0.25,cornersize=absolute,linecolor=blue](-3.8,-1.0)(4.8,4.0)
\psline[linewidth=0.5pt,linecolor=blue,arrowscale=1.0]{-v}%
(-1.0,2)(-0.5,3)
\psline[linewidth=0.5pt,linecolor=blue,arrowscale=1.0]{-v}%
(-0.5,3)(0.5,3.5)
\psline[linewidth=0.5pt,linecolor=blue,arrowscale=1.0]{-v}%
(0.5,3.5)(1.5,3.0)
\psline*[linecolor=gray!20]% h_1(0)
(1.5,3.0)(2.0,2.0)(0.5,1.5)
\psline[linewidth=0.5pt,linecolor=blue,arrowscale=1.0]{-v}%
(1.5,3.0)(2.0,2.0)
\psdots[dotstyle=o,dotsize=2.5pt,linewidth=1.2pt,linecolor=black,fillcolor=red!80](0.5,1.5)
\psline[linewidth=0.5pt,linecolor=blue,arrowscale=1.0]{-v}%
(2.0,1.0)(1.5,0.0)
\psline[linewidth=0.5pt,linecolor=blue,arrowscale=1.0]{-v}%
(1.5,0.0)(0.5,-0.5)
%\psline[linewidth=0.5pt,linecolor=blue,arrowscale=1.0]{-v}%
%(0.5,-0.5)(-0.5,0.0)
\psline[linewidth=0.5pt,linecolor=blue,arrowscale=1.0]{-v}%
(-0.5,0.0)(-1.0,1.0)
\psline[linewidth=0.5pt,linecolor=blue,arrowscale=1.0]{-v}%
(-1.0,1.0)(-1.0,2)
\psdots[dotstyle=o,dotsize=2.5pt,linewidth=1.2pt,linecolor=black,fillcolor=red!80]% shared boundary vertex
(-1.0,2)(-0.5,3)(0.5,3.5)(1.5,3.0)(2.0,2.0)(2.0,1.0)(1.5,0.0)%
(0.5,-0.5)(-0.5,0.0)(-1.0,1.0)(-1.0,1.0)(-1.0,2)
\rput(-0.7,2.0){\footnotesize $\boldsymbol{v_1}$}
\rput(-0.7,1.0){\footnotesize $\boldsymbol{v_0}$}
\psdots[dotstyle=o,dotsize=1.5pt,linewidth=1.2pt,linecolor=black,fillcolor=red!80]% shared boundary vertex
(2.0,1.7)(2.0,1.5)(2.0,1.3)
\psdots[dotstyle=o,dotsize=1.5pt,linewidth=1.2pt,linecolor=black,fillcolor=red!80]% shared boundary vertex
(0.2,-0.4)(0.0,-0.3)(-0.2,-0.2)
%
%\rput(-3.2,3.8){\footnotesize $\boldsymbol{K}$}
\rput(2.8,0.9){\footnotesize %
$\boldsymbol{h_{i}(0)=v_i}$} 
\rput(2.5,0){\footnotesize $\boldsymbol{h_{i}(1)=v_{i+1}}$}
\rput(-1.0,-0.2){\footnotesize $
\boldsymbol{h_{(n-1)[n]}(0)}$}
\rput(-2.3,0.9){\footnotesize %
$\boldsymbol{h_{0}(0) =h_{n-1[n]}(1)}$} 
\rput(-1.85,0.4){\footnotesize $\boldsymbol{=\ v_0}$}%
\rput(-2.2,2.1){\footnotesize $
\boldsymbol{h_1(0)\ = h_0(1)}$}
\rput(-1.6,3.1){\footnotesize $
\boldsymbol{h_2(0)=h_{1}(1)}$}
\rput(0.9,3.7){\footnotesize $
\boldsymbol{h_3(0) = v_3}$}
\rput(0.5,2.5){$
\boldsymbol{\hcyc E}$}
\end{pspicture}
\caption[]{1-Cycle $\boldsymbol{\cyc E\ \to}$ homotopic cycle $\boldsymbol{\hcyc E}$ on space $\boldsymbol{K}$.}
\label{fig:homotopicCycle}
\end{figure}

\section{Introduction}
This paper is based on~\cite{Peters2021KievConf}, which introduces homotopic nerve complexes in a planar Whitehead CW space~\cite[\S 4-5]{Whitehead1949BAMS-CWtopology} and their Rotman free group presentations~\cite[\S 11,p.239]{Rotman1965theoryOfGroups}.  A CW complex in a space $K$ is a closure-finite cell complex that is Hausdorff (distinct points in $K$ are in disjoint neighborhoods), satisfying the containment property (closure of every cell complex is in $K$) and intersection property (common parts of cell complexes in $K$ are also in $K$).  A complex $K$ is locally finite, i.e., every point $p\in K$ is a member of some finite subcomplex of $K$ and every complex has a finite number of faces~\cite[\S 5.2,p.65]{Switzer2002CWcomplex}.  A planar CW complex $K$ is a collection of 0-cells (vertexes), 1-cells (edges) and 2-cells (filled triangles).  Collections of planar cells attached to each other are sub-complexes in $K$. For the details, see Appendix~\ref{ap:CW}.

\section{Preliminaries}  
The foundations for homotopic nerves are briefly introduced in this section, starting with the definition of a 1-cycle.

\begin{definition}\label{def:filledCycle} {\rm \bf 1-Cycle}.
In a CW space $K$~\cite{Whitehead1949BAMS-CWtopology}, a 1-cycle $E$ (denoted by $\cyc E$) in a CW space $K$ is a collection of path-connected vertexes (0-cells) on edges (1-cells) attached to each other with no end vertex and $\cyc E$ has a nonvoid interior.
\qquad\textcolor{blue}{\Squaresteel}
\end{definition}

\begin{definition}\label{def:CycleClosure} {\rm \bf Closure}.
Let $A\in 2^X$ (nonvoid subset $A$ in a space $X$) and $D(x,A) = \inf\left\{a\in A: \abs{x-a}\right\}$ be the Hausdorff distance between a point $x\in A$ and subset $A$~\cite[\S 22,p. 128]{Hausdorff1914b}.  The closure of $A$~\cite[\S 1.18,p. 40]{DBLP:series/isrl/2014-63} is defined by
\[
\cl(A) = \left\{x\in X: D(x,A) = 0\right\}.\ \mbox{\qquad\textcolor{blue}{\Squaresteel}}
\]
\end{definition}

\begin{definition}\label{def:closure} For a space $X, A\in 2^X$, let $\cl A$ be the closure of $A$.  Then the boundary of $A$ (denoted by $\bdy A$) is the set of all points on the border of $\cl A$ and not in the complement of $\cl A$ (denoted by $\partial \cl A$).  Also, the interior of $A$ (denoted by $\Int A$) is the set of all points in $\cl A$ and not on the boundary of $A$\footnote{This version of $\Int A$ comes from T. Vergili.}, {\em i.e},
\begin{align*}
\partial(\cl A) &= X\setminus \cl A,\ \mbox{all points in $X$ and not in $\cl A$}.\\
\Int(A) &= \left\{E\in 2^X: E\subset \cl A\ \mbox{and}\ E \cap \bdy A = \emptyset  \right\}.\\
\bdy(A) &= X\setminus (\Int A\cup \partial \cl A).\\ 
\cl A &= \bdy(A)\cup\Int(A).\ \mbox{\qquad\textcolor{blue}{\Squaresteel}}
\end{align*}
\end{definition} 

\begin{example}\label{ex:1-cycle}
A sample 1-cycle $\cyc E$ with vertices $v_0,\dots,v_{(n-1)[n]}$ with $[n] = \mbox{mod}\ n, n\in\mathbb{Z}^{0,+}$, is shown in Fig.~\ref{fig:homotopicCycle}.  This 1-cycle $\cyc E$ has a finite number of $i\ \mbox{mod}\ n$ vertices, with the initial vertex $v_0 = v_{n\ \mbox{mod}\ n}$.  $\cyc E$ also has a nonvoid interior (denoted by $\Int(\cyc E)$) containing at least one filled triangle with one of its vertices equal to centroid of the cycle and its other two vertices on the boundary of $\cyc E$ (denoted by $\bdy(\cyc E)$).  In other words, 
\[
\cl(\cyc E) = \bdy(\cyc E)\cup \Int(\cyc E).\mbox{\qquad\textcolor{blue}{\Squaresteel}}
\]
\end{example}

The edges in a 1-cycle cell complex in a CW space are replaced by homotopic maps to obtain a homotopic cycle.

\begin{definition}\label{def:HomotopicMap} {\rm \bf Homotopic-Map}.\\
Let $I = [0,1]$, the unit interval.  A {\bf homotopic map}(aka\ \emph{\bf path}) in a space $X$ is a continuous map $h:I\to X$ with endpoints $h(0)=x_0, h(t)\in [h(0),h(1)], t\in I$ and $h(1)=x_1$~\cite[\S 2.1,p.11]{Switzer2002CWcomplex}.  
\qquad \textcolor{blue}{\Squaresteel}
\end{definition}

%The replacement of each 1-cell in a 1-cycle with a homotopic map results in a homotopic cycle. 

\begin{definition}\label{def:filledHomotopicCycle} {\rm \bf Homotopic-Cycle~\cite[App. A]{PetersVergili2021proximalHomotopy}}.\\
A {\bf\large homotopic cycle} $\hcyc E$ in a CW space $K$ is a sequence of continuous maps $\left\{h_i\right\}_{i=0}^{(n-1)[n]}$, $h_i:I\to K, i\in\mathbb{Z}, I = [0,1], [n] =\ \mbox{mod}\ n$, with 
\begin{align*}
h_0(0) &= h_{n[n]}(0) = v_0\in K, h_{n-1[n]}(1)= v_{n-1[n]}\in K.\\
\mbox{Given}\ g &\in \left\{h_i\right\}_{i=0}^{(n-1)[n]},\\ 
g &= h_0(0),\ \mbox{a basis element in $\hcyc E$,  so that}\\
v &\overbrace{\assign kg =kh_0(0)=h_{0+k}(0)=h_{k}(0)\in K, k\in \mathbb{Z}.}^{\mbox{\textcolor{blue}{\bf $h_{i+k}(0)$\ \mbox{walks forward}\ $k$ vertexes\ \mbox{from $g$ to reach}\ $v$}}}\\
\bar{h}_{i+k}(0)&\overbrace{\assign h_{(i+k)-k}(0) = h_i(0).}^{\mbox{\textcolor{blue}{\bf $\bar{h}_{i+k}(0)$\ \mbox{walks back}\ $k$ vertexes from $h_{i+k}(0)$ to reach $h_i(0)$}}}\\
+:K\times K&\to K,\ \mbox{is defined by}\\
+(v,v') &= \overbrace{+(kg,k'g) = kg+k'g.}^{\mbox{\textcolor{blue}{\bf move to $\boldsymbol{v''}$ via $\boldsymbol{v,v'}$}}}\\
        &= kh_0(0)+k'h_0(0) = h_{(k+k')[n]}(0) = v''\in K.\mbox{\qquad \textcolor{blue}{\Squaresteel}}
\end{align*}
\end{definition}

In a homotopic cycle $\hcyc E$, every vertex $v_i$ is reachable by $k$ maps from a distinguished vertex $h_0(0)=v_0$ (a generator in the free group presentation of the homotopic cycle).  An inverse homotopic map $\bar{h}_{i+k}(0)$ is defined in terms of reverse movement (traversal from the $h_{i+k}(0)$ cycle vertex to the previous $h_{i}(0)$ cycle vertex)\footnote{Here, inverses are defined in terms of homotopic map indices, instead of $\bar{h}_i(t) = h_i(1-t)$~\cite[\S 1.1, p. 27]{Hatcher2002CUPalgebraicTopology}. Many thanks to Tane Vergili for pointing this out.}.  For simplicity, we consider only $i\in \mathbb{Z}$ for movements between cycle vertexes.  
%In a more general framework, traversal over a homotopic cycle would be in terms of movement between $h_{i}(t)$ points, $i\in [0,1]$. 

\begin{example}
A planar homotopic cycle over a collection of vertexes in a CW space is shown in Fig.~\ref{fig:homotopicCycle}.  From Def.~\ref{def:filledHomotopicCycle}, the expression $3v_0 = 3h_0(0) = 
 h_{0+3}(0) = v_3\in K$ represents a move from vertex $v_0$ to vertex $v_3$.  What we observed about the structure of the 1-cycle $\cyc E$ in Example~\ref{ex:1-cycle} carries over in delineating the structure of a homotopic cycle.  That is, $\hcyc E$ also has a nonvoid interior (denoted by $\Int(\hcyc E)$) that it inherits from 1-cycle $\cyc E$. The boundary of $\hcyc E$ (denoted by $\bdy(\hcyc E)$) has been constructed with its sequence of continuous homotopic maps.  In other words, 
\[
\cl(\hcyc E) = \bdy(\hcyc E)\cup \Int(\hcyc E).\mbox{\qquad\textcolor{blue}{\Squaresteel}}
\]
\end{example}

\begin{theorem}\label{thm:JordanCurveTheorem} {\rm [Jordan Curve Theorem~\cite{Jordan1893coursAnalyse}]}.\\
	A simple closed curve lying on the plane divides the  
	plane into two regions and forms their common boundary.
\end{theorem}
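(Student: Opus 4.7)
The plan is to split the two assertions of the theorem—\emph{exactly two complementary regions}, and \emph{$J$ is their common boundary}—and attack the first by compactification plus duality, then bootstrap the second by a local argument near each point of the curve.

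First I would pass from $\mathbb{R}^2$ to its one-point compactification $S^2 = \mathbb{R}^2 \cup \{\infty\}$, so that the Jordan curve $J$ becomes a topologically embedded copy of $S^1$ inside $S^2$. Alexander duality for compact subsets of a sphere then gives
\[
\tilde{H}_0(S^2 \setminus J) \;\cong\; \check{H}^1(J) \;\cong\; \check{H}^1(S^1) \;\cong\; \mathbb{Z},
\]
so $S^2 \setminus J$ has exactly two connected components. Reinstating the removed point, one component becomes the unbounded exterior $U_\infty$ and the other the bounded interior $U_b$ of $J$ in $\mathbb{R}^2$; both are open because $J$, being a continuous image of a compact set, is closed. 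This establishes the ``two regions'' half of the statement.

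Second, I would show that $J$ is the common boundary. Since $\mathbb{R}^2 \setminus J = U_\infty \cup U_b$ is open, the topological boundary of each $U_i$ is automatically contained in $J$, so the content lies in the reverse inclusion: every $p \in J$ must be a limit of points of both $U_b$ and $U_\infty$. I would prove this locally, by choosing around $p$ an arbitrarily small open disk $D$, selecting a small subarc $\alpha \subset J$ through $p$ with endpoints on $\partial D$, and replacing $\alpha$ with a simple chord $\beta \subset D$ joining those endpoints to form two auxiliary Jordan curves $J_1 = \alpha \cup \beta$ and $J_2 = (J \setminus \mathrm{int}\,\alpha) \cup \beta$. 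Applying the first half of the theorem to $J_1$ and $J_2$, and keeping track of which of the resulting bounded/unbounded regions contains $U_b$ and which contains $U_\infty$, produces points of each in every neighborhood of $p$.

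The hard part is precisely this local step. Because $J$ is only assumed to be a continuous embedding of $S^1$—not a smooth or piecewise-linear curve—the intersection $J \cap D$ need not be a single arc for small $D$; the curve can re-enter the disk arbitrarily often, as in Osgood-type pathologies. I would overcome this by shrinking $D$ along a countable neighborhood basis and using uniform continuity of the parametrization $\varphi : S^1 \to J$ to guarantee that, for all sufficiently small $D$, there is a subarc of $S^1$ whose $\varphi$-image contains $p$ and lies entirely inside $D$ while the $\varphi$-image of its complement remains outside a still smaller concentric disk $D' \subset D$. Once such a ``clean'' subarc $\alpha$ is in hand, the chord-replacement construction $J_1, J_2$ of the previous paragraph becomes legitimate, and the two-region conclusion applied to $J_1$ and $J_2$ yields the desired accumulation of $U_b$- and $U_\infty$-points at $p$, completing the identification of $J$ as the common boundary.
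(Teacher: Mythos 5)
Your proposal should first be set against what the paper actually does here: the paper gives no proof of the Jordan Curve Theorem at all, but simply points the reader to Veblen, to Maehara's argument via the Brouwer Fixed Point Theorem, and to Munkres. So any genuine argument you supply is automatically a different route. Your first half is a correct and standard one: pass to $S^2=\mathbb{R}^2\cup\{\infty\}$ and apply Alexander duality to get
\[
\tilde{H}_0(S^2\setminus J)\cong\check{H}^1(J)\cong\check{H}^1(S^1)\cong\mathbb{Z},
\]
hence exactly two components, both open because $S^2$ is locally connected and $J$ is compact, hence closed. (One small point you pass over: after deleting $\infty$ from the component containing it, you need that component to stay connected, which holds because removing a point from a connected open subset of a $2$-manifold does not disconnect it.) This buys a short, conceptually clean proof of the separation statement at the cost of the duality machinery, whereas Maehara's route, which the paper cites, gets the same conclusion from the Brouwer Fixed Point Theorem with far less apparatus.

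The genuine gap is in your second half. The chord-replacement step is not yet a construction: the endpoints of your ``clean'' subarc $\alpha$ lie in $D$ but need not lie on $\partial D$ or in the smaller disk $D'$, the set $\alpha\cap\overline{D'}$ need not itself be an arc, and the complementary arc $J\setminus\mathrm{int}\,\alpha$ can re-enter $D$ (though not $D'$), so there is no guarantee that a simple arc $\beta$ joining the endpoints of $\alpha$ meets $J$ only at those endpoints; without that, $J_1$ and $J_2$ are not Jordan curves and the bookkeeping of which regions contain $U_b$ and $U_\infty$ has nothing to stand on. You can close the gap, but the cleaner and standard completion uses a lemma you already have for free from the duality you invoked: for an arc $A\subset S^2$, $\tilde{H}_0(S^2\setminus A)\cong\check{H}^1(A)=0$, so no arc separates the sphere. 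Then, given $p\in J$ and a neighborhood $W$ of $p$, write $J=J_1\cup J_2$ with $J_1\subset W$ a subarc through $p$ and $J_2$ the complementary closed subarc; join a point of $U_b$ to a point of $U_\infty$ by a path in the connected open set $S^2\setminus J_2$, and observe that the first and last times this path meets $J$ occur on $J_1\subset W$, producing points of $U_b$ and of $U_\infty$ in $W$. This replaces the fragile auxiliary-Jordan-curve construction with a one-line application of the same duality computation, and is essentially the argument in Munkres that the paper defers to.
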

\begin{proof}
	For the first complete proof, see O. Veblen~\cite{Veblen1905TAMStheoryOFPlaneCurves}.  For a simplified proof via the Brouwer Fixed Point Theorem, see R. Maehara~\cite{Maehara1984AMMJordanCurvedTheoremViaBrouwerFixedPointTheorem}.  For an elaborate proof, see J.R. Munkres~\cite[\S 63, 390-391, Theorem 63.4]{Munkres2000}.
\end{proof}

The boundary region of any planar homotopic cycle $\hcyc E$ (denoted by $\partial(\hcyc E)$) contains all cell complexes that are not part of the cycle in a CW space. 

\begin{theorem}
Every homotopic cycle in a CW space 
\begin{compactenum}[1$^o$]
\item is a finite collection of path-connected vertices and 
\item the boundary of a homotopic cycle partitions the CW space into two regions.
\end{compactenum}
\end{theorem}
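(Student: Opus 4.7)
The plan is to handle the two claims separately, invoking the CW structure of $K$ for $1^o$ and the Jordan Curve Theorem (Theorem~\ref{thm:JordanCurveTheorem}) for $2^o$. Throughout I treat $K$ as a planar CW space, following the standing assumption announced in the Introduction.

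For $1^o$, the key ingredient is the local finiteness of the CW space recalled in the Introduction: every point of $K$ lies in some finite subcomplex, and every cell complex has a finite number of faces. Definition~\ref{def:filledHomotopicCycle} indexes the homotopic maps by $i\bmod n$ for a fixed $n\in\mathbb{Z}^{0,+}$, so $\hcyc E$ contains at most $n$ distinct vertexes $v_0,\dots,v_{n-1}$. Since each $h_i:I\to K$ is continuous and $h_i(1)=h_{i+1}(0)$, the concatenation $h_0\ast h_1\ast\cdots\ast h_{(n-1)[n]}$ is a single continuous path passing through all the $v_i$ in sequence, so the collection $\{v_0,\dots,v_{n-1}\}$ is path-connected in $K$, and $1^o$ follows.

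For $2^o$, the idea is to exhibit $\bdy(\hcyc E)$ as a simple closed curve in the plane and then apply Theorem~\ref{thm:JordanCurveTheorem} directly. Closedness is immediate from $h_{(n-1)[n]}(1)=h_0(0)=v_0$ and the cyclic indexing in Definition~\ref{def:filledHomotopicCycle}. For simpleness I would rely on the Hausdorff property of $K$ to separate the distinct vertexes $v_i$ into disjoint neighborhoods, together with the intersection property of a CW complex to ensure that non-adjacent homotopic maps $h_i$ and $h_j$ (those with $|i-j|>1$ mod $n$) share at most endpoint vertexes. This reduces $\bdy(\hcyc E)$ to a Jordan curve, after which Theorem~\ref{thm:JordanCurveTheorem} immediately produces the two complementary regions having $\bdy(\hcyc E)$ as common boundary.

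The main obstacle I anticipate is justifying simpleness, since Definition~\ref{def:filledHomotopicCycle} only requires the maps $h_i$ to be continuous, not injective, and does not a priori prevent two different $h_i,h_j$ from crossing in the interior of $K$. The cleanest remedy is to observe that each $h_i$ is the homotopic replacement of a single 1-cell of the underlying 1-cycle $\cyc E$ (Definition~\ref{def:filledCycle}), so its image lies in the closure of that 1-cell; non-self-intersection of $\hcyc E$ is therefore inherited from the non-self-intersection of $\cyc E$, which itself is a simple closed polygonal path in $K$ as illustrated in Example~\ref{ex:1-cycle}. Once this inheritance is recorded, the application of Jordan's theorem closes the proof without further calculation.
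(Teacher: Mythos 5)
Your proposal follows essentially the same route as the paper: part $1^o$ is obtained from the endpoint-matching conditions $h_{i+1[n]}(0)=h_{i[n]}(1)$, $h_{0[n]}(0)=h_{n-1[n]}(1)$ together with the bound on the number of vertexes imposed by the modulus $n$, and part $2^o$ is obtained by exhibiting the cycle as a planar simple closed curve and invoking Theorem~\ref{thm:JordanCurveTheorem}. The one place you go beyond the paper is in flagging and repairing the simpleness of the curve (the paper merely asserts that the sequence of homotopic maps ``constructs a planar, simple, closed curve''), and your repair is sound because Definition~\ref{def:HomotopicMap} confines each $h_i(t)$ to the segment $[h_i(0),h_i(1)]$, so non-self-intersection is indeed inherited from the underlying polygonal 1-cycle.
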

\begin{proof}$\mbox{}$\\
$1^o$: Let $\hcyc E$ be a homotopic cycle in a CW space $K$. From Def.~\ref{def:filledHomotopicCycle}, the end-points $h_{i[n]}(0),h_{i[n]}(1)$ are points in $K$, which are 0-cells (vertices) in $K$.  The 0-cells are path-connected, since each $h_{i+1[n]}(0) = h_{i[n]}(1)$ and $h_{0[n]}(0) = h_{n-1[n]}(1)$. The number of vertices in $\hcyc E$ is restricted by the choice of the modulus $n$.  Hence, $\hcyc E$ has a finite collection of path-connected vertices.\\
$2^o$: From Def.~\ref{def:HomotopicMap}, every homotopic map $h:I\to K$ is continuous.  Also, from Def.~\ref{def:HomotopicMap}, the sequence of homotopic maps in a homotopic cycle $\hcyc E$ on $K$ constructs a planar, simple, closed curve.  Then, from Theorem~\ref{thm:JordanCurveTheorem} (Jordan Curve Theorem), $\hcyc E$ separates the space $K$ into two regions, namely,
\begin{compactenum}[1$^o$]
\item[\bf Region outside $\hcyc E$]: $\partial(\hcyc E) = K\setminus \hcyc E$, and
\item[\bf Region inside $\hcyc E$]:
\[ 
\Int(\hcyc E) = K\setminus (\bdy(\hcyc E)\cup \partial(\hcyc E)).
\]
\end{compactenum} 
\end{proof}

\begin{figure}[!ht]
\centering
\begin{pspicture}
%[showgrid=true]
(-3.5,-1.0)(8.0,4.0)
\centering
%\psframe[linewidth=0.75pt,linearc=0.25,cornersize=absolute,linecolor=blue](-3.8,-1.0)(4.8,4.0)
\psline*[linecolor=gray!20]% h_1(0)
(1.5,3.0)(2.0,2.0)(0.5,1.5)
\psline*[linecolor=gray!20]% h'_1(0)
(2,2)(2.5,3)(4,2.5)
\psline[linewidth=0.5pt,linecolor=blue,arrowscale=1.0]% {-v}h_3(0)
(-1.0,2)(-0.5,3)
\psline[linewidth=0.5pt,linecolor=blue,arrowscale=1.0]% {-v}h_2(0)
(-0.5,3)(0.5,3.5)
\psline[linewidth=0.5pt,linecolor=blue,arrowscale=1.0]% {-v}h_1(0)
(0.5,3.5)(1.5,3.0)
\psline[linewidth=0.5pt,linecolor=blue,arrowscale=1.0]%
(1.5,3.0)(2.0,2.0)
%\psline*[linecolor=gray!20]% h_1(0)
%(1.5,3.0)(2.0,2.0)(0.5,1.5)
\psline[linewidth=0.5pt,linecolor=blue,arrowscale=1.0]%
(2.0,1.0)(1.5,0.0)
\psline[linewidth=0.5pt,linecolor=blue,arrowscale=1.0]%
(1.5,0.0)(0.5,-0.5)
%\psline[linewidth=0.5pt,linecolor=blue,arrowscale=1.0]{-v}%
%(0.5,-0.5)(-0.5,0.0)
\psline[linewidth=0.5pt,linecolor=blue,arrowscale=1.0]%
(-0.5,0.0)(-1.0,1.0)
\psline[linewidth=0.5pt,linecolor=blue,arrowscale=1.0]%
(-1.0,1.0)(-1.0,2)
\psdots[dotstyle=o,dotsize=2.5pt,linewidth=1.2pt,linecolor=black,fillcolor=red!80]% shared boundary vertex
(-1.0,2)(-0.5,3)(0.5,3.5)(1.5,3.0)(2.0,2.0)(2.0,1.0)(1.5,0.0)%
(0.5,-0.5)(-0.5,0.0)(-1.0,1.0)(-1.0,1.0)(-1.0,2)
\rput(-0.7,2.0){\footnotesize $\boldsymbol{v_1}$}
\rput(-0.7,1.0){\footnotesize $\boldsymbol{v_0}$}
\psdots[dotstyle=o,dotsize=1.5pt,linewidth=1.2pt,linecolor=black,fillcolor=red!80]% shared boundary vertex
(2.0,1.7)(2.0,1.5)(2.0,1.3)
\psdots[dotstyle=o,dotsize=1.5pt,linewidth=1.2pt,linecolor=black,fillcolor=red!80]% shared boundary vertex
(0.2,-0.4)(0.0,-0.3)(-0.2,-0.2)
\psline[linewidth=0.5pt,linecolor=blue,arrowscale=1.0]% h'_1(0)
(2,2)(2.5,3)
%\psline*[linecolor=gray!20]% h'_1(0)
%(2,2)(2.5,3)(4,2.5)
\psline[linewidth=0.5pt,linecolor=blue,arrowscale=1.0]% h'_2(0)
(2.5,3)(3.5,3.5)
\psline[linewidth=0.5pt,linecolor=blue,arrowscale=1.0]% h'_3(0)
(3.5,3.5)(4.5,3.5)
\psline[linewidth=0.5pt,linecolor=blue,arrowscale=1.0]% h'_4(0)
(4.5,3.5)(5.5,3.0)
\psline[linewidth=0.5pt,linecolor=blue,arrowscale=1.0]% h'_5(0)
(5.5,3.0)(6.0,2.0)
\psline[linewidth=0.5pt,linecolor=blue,arrowscale=1.0]% h'_6(0)
(6.0,2.0)(5.0,1.5)
\psline[linewidth=0.5pt,linecolor=blue,arrowscale=1.0]% h'_7(0)
(5.0,1.5)(4.0,1.5)
\psline[linewidth=0.5pt,linecolor=blue,arrowscale=1.0]% h'_8(0)
(4.0,1.5)(3.0,1.5)
\psline[linewidth=0.5pt,linecolor=blue,arrowscale=1.0]% h'_9(0)
(3.0,1.5)(2.0,2)
\psdots[dotstyle=o,dotsize=2.5pt,linewidth=1.2pt,linecolor=black,fillcolor=red!80]% shared boundary vertex
(2.5,3)(3.5,3.5)(5.5,3.0)(6.0,2.0)(5.0,1.5)(4.0,1.5)%
(3.0,1.5)(2.0,2)
\psdots[dotstyle=o,dotsize=3.5pt,linewidth=1.2pt,linecolor=black,fillcolor=green!80]% shared boundary vertex
(2.0,2)
\rput(2.8,2.15){\footnotesize %
$\boldsymbol{v'_0 = v_5}$} 
\rput(6.3,2.0){\footnotesize %
$\boldsymbol{v'_{j}}$}
\rput(0.5,1.5){$
\boldsymbol{\cyc E}$} 
\rput(4.0,2.5){$
\boldsymbol{\cyc E'}$} 
\rput(2.3,0.9){\footnotesize %
$\boldsymbol{v_{i}}$} 
\rput(-0.1,3.0){\footnotesize $
\boldsymbol{v_2}$}
\rput(0.9,3.5){\footnotesize $
\boldsymbol{v_3}$}
\rput(1.7,3.0){\footnotesize $
\boldsymbol{v_4}$}
\rput(2.75,2.8){\footnotesize $
\boldsymbol{v'_1}$}
\end{pspicture}
\caption[]{Nerve $\boldsymbol{\Nrv E = \left\{\cyc E,\cyc E': \cyc E\cap\cyc E'\neq \emptyset\right\}}$.}
\label{fig:nerveComplex}
\end{figure}

\begin{definition}\label{def:filledNervComplex} {\rm \bf Nerve Complex}.\\
A nerve complex $\Nrv E$ in a CW space is a collection of nonempty cell complexes with nonvoid intersection.  This is an example of an Alexandrov nerve~\cite[\S 4.3,p. 39]{Alexandroff1932elementaryConcepts}.
\textcolor{blue}{\Squaresteel}
\end{definition}

A vertex with a collection of triangles attached to it, is called the {\bf nucleus} of the attached triangles (also called the nucleus of an Alexandrov-Hopf nerve complex or {\bf Nerv}~\cite{AlexandroffHopf1935Topologie}).

\begin{lemma}\label{lemma:AlexandrovNerv}
Every vertex in the triangulation of the vertices in a CW space is the nucleus of an Alexandrov nerve complex. 
\end{lemma}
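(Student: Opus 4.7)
The plan is to argue directly from the definitions. Fix an arbitrary vertex $v$ in the triangulation of a CW space $K$. First I would unpack what ``triangulation of the vertices'' means: each triangulation produces a collection of filled 2-cells (triangles) whose 0-cells are the given vertices of $K$, and every vertex of the triangulation is incident to at least one 2-simplex (otherwise it would be an isolated 0-cell, not part of a triangulation). So form the collection
\[
\mathcal{T}_v \assign \bigl\{\,\triangle\subset K : \triangle\ \text{is a 2-cell of the triangulation with}\ v\in\triangle\,\bigr\}.
\]

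Next I would verify that $\mathcal{T}_v$ fulfills the requirements of Def.~\ref{def:filledNervComplex}. Each $\triangle\in \mathcal{T}_v$ is a filled triangle, hence a nonempty cell complex in $K$. Every element of $\mathcal{T}_v$ contains the 0-cell $v$ by construction, so
\[
\bigcap_{\triangle\in \mathcal{T}_v}\triangle \ \supseteq\ \{v\}\ \neq\ \emptyset .
\]
Thus $\mathcal{T}_v$ is a collection of nonempty cell complexes with nonvoid common intersection, which is precisely the content of an Alexandrov nerve complex $\Nrv E$ in the sense of Def.~\ref{def:filledNervComplex}.

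Finally, I would invoke the paragraph immediately preceding the lemma: a vertex with a collection of triangles attached to it is, by definition, the nucleus of those attached triangles (equivalently, the nucleus of the Alexandrov--Hopf nerve they form). Since $v$ is attached to every $\triangle\in\mathcal{T}_v$, the vertex $v$ is the nucleus of $\Nrv \mathcal{T}_v$, completing the proof.

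The only conceivable obstacle is the tacit assumption that in a triangulation every vertex is actually incident to at least one 2-simplex; this is guaranteed by the closure-finiteness and incidence conditions of a CW triangulation recalled in the Introduction, so I would mention this once and not belabor it. All other steps are routine checks against Def.~\ref{def:filledNervComplex} and the nucleus definition.
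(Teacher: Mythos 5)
Your proposal is correct and follows essentially the same route as the paper's proof: collect the filled triangles attached to a fixed vertex $v$, observe that they form a collection of nonempty cell complexes with nonvoid common intersection (containing $v$), invoke Def.~\ref{def:filledNervComplex}, and identify $v$ as the nucleus. Your version is in fact slightly more careful — you make the incidence assumption explicit and correctly name the vertex $v$ (rather than the complex $E$) as the nucleus — but there is no substantive difference in approach.
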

\begin{proof}
Let $K$ be a CW space containing $n$ trianulated vertexes.
Each vertex $v\in K$ has 1 or more triangles attached to it, forming a cell complex $E$. Then $E$ is a collection of cell complexes (filled triangles) with nonvoid intersection.  By Def.~\ref{def:filledNervComplex}, $E$ is an Alexandrov nerve complex.  Hence, each $E\in 2^K$ is the nucleus of an Alexandrov nerve.
\end{proof}

\begin{theorem}\label{thm:Nerv}
A CW space containing $n$ triangulated vertexes contains $n$ Alexandrov-Hopf nerve complexes. 
\end{theorem}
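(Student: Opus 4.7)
The plan is to bootstrap directly from Lemma~\ref{lemma:AlexandrovNerv}, which has already done the structural work of identifying each vertex as the nucleus of an Alexandrov nerve. What remains is essentially a counting argument: exhibit a one-to-one correspondence between the triangulated vertices of $K$ and the Alexandrov-Hopf nerves one constructs around them.

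First I would fix an enumeration $v_1,\dots,v_n$ of the triangulated vertices in the CW space $K$. For each index $i$, I would invoke Lemma~\ref{lemma:AlexandrovNerv} to produce a cell sub-complex $E_i\in 2^K$ consisting of all filled triangles incident to $v_i$, and observe (via Definition~\ref{def:filledNervComplex}) that $E_i$ is an Alexandrov nerve whose nucleus is $v_i$, hence an Alexandrov-Hopf \textbf{Nerv} in the sense of~\cite{AlexandroffHopf1935Topologie}. The nonvoid intersection condition in Def.~\ref{def:filledNervComplex} is satisfied because every filled triangle in $E_i$ shares the vertex $v_i$.

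Next I would show that the family $\{E_1,\dots,E_n\}$ has $n$ distinct members. The key observation is that the nucleus of an Alexandrov-Hopf nerve is an intrinsic attribute: it is the unique vertex common to all filled triangles that make up the nerve. Thus if $E_i = E_j$ as nerve complexes, then they share the same nucleus, forcing $v_i = v_j$ and therefore $i = j$. This injectivity of the assignment $v_i\mapsto E_i$ is what converts the lemma's pointwise existence into a global count.

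The only potential obstacle I foresee is the uniqueness-of-nucleus claim just used, since a vertex on the boundary of $K$ may be incident to only one or two triangles, and one could worry that two neighbouring vertices end up with the same incident-triangle set. In that case I would argue that even when the underlying set of 2-cells happens to coincide, the two nerves are distinguished as \emph{pointed} Alexandrov-Hopf complexes (each carrying its nucleus as distinguished 0-cell), which is the standard convention in~\cite{AlexandroffHopf1935Topologie}. Concluding, the map $v_i\mapsto E_i$ is an injection from the $n$ triangulated vertices into the set of Alexandrov-Hopf nerve complexes of $K$, so $K$ contains at least $n$ such nerves, matching the claim.
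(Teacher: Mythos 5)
Your proposal takes the same route as the paper, which simply declares the theorem ``Immediate from Lemma~\ref{lemma:AlexandrovNerv}''; you merely make explicit the enumeration and the injectivity of the assignment $v_i\mapsto E_i$ that the paper leaves tacit. Your extra care about two vertices sharing the same incident-triangle set (resolved by regarding the nerves as pointed at their nuclei) is a reasonable patch for a genuine edge case the paper does not address, but it does not change the underlying argument.
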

\begin{proof}
Immediate from Lemma~\ref{lemma:AlexandrovNerv}.
\end{proof}

Alexandrov nerves are also found in collections of intersecting 1-cycles.

\begin{theorem}\label{theorem:commonVertex}
A pair of pair of 1-cycles with a common vertex in a CW space is a nerve complex.
\end{theorem}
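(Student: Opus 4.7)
The plan is to unpack Definition~\ref{def:filledNervComplex} (Nerve Complex) and verify its two requirements for a pair $\{\cyc E,\cyc E'\}$ of 1-cycles in a CW space $K$ that share a common vertex $v$. Definition~\ref{def:filledNervComplex} asks for (i) a collection of nonempty cell complexes and (ii) nonvoid intersection of that collection. Both are essentially definitional consequences of the hypotheses, so no auxiliary construction should be needed.

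First I would invoke Definition~\ref{def:filledCycle} to recall that each of $\cyc E$ and $\cyc E'$ is a collection of path-connected 0-cells attached by 1-cells in $K$ with nonvoid interior; in particular each is a nonempty subcomplex of the CW space $K$, so requirement (i) is met. Second, the standing hypothesis provides a vertex $v$ with $v\in\cyc E$ and $v\in\cyc E'$, hence
\[
v\in \cyc E\cap \cyc E', \qquad \text{so}\qquad \cyc E\cap \cyc E'\neq \emptyset,
\]
which gives requirement (ii). The configuration is exactly the one illustrated in Figure~\ref{fig:nerveComplex}, where the shared vertex $v_5 = v'_0$ is the point of intersection that witnesses the nerve structure.

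Combining the two observations, the pair $\{\cyc E,\cyc E'\}$ satisfies Definition~\ref{def:filledNervComplex} and therefore is a nerve complex in $K$.

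The hard part, such as it is, is not an obstacle but a bookkeeping matter: one must be careful to invoke the Definition~\ref{def:filledCycle} view of a 1-cycle as a genuine cell complex (a subcomplex of $K$ built from 0-cells and 1-cells with nonvoid interior), rather than as a mere set of points, so that it qualifies as one of the ``nonempty cell complexes'' demanded by Definition~\ref{def:filledNervComplex}. Once this identification is made explicit, the theorem reduces to a one-line verification from the shared-vertex hypothesis.
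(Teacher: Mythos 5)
Your proposal is correct and follows essentially the same route as the paper's own proof: identify each 1-cycle as a nonempty cell complex via Definition~\ref{def:filledCycle}, note that the shared vertex witnesses $\cyc E\cap\cyc E'\neq\emptyset$, and conclude by Definition~\ref{def:filledNervComplex}. Your version is, if anything, slightly more explicit about checking both clauses of the nerve definition, but there is no substantive difference in approach.
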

\begin{proof}
Let $\cyc E,\cyc E'\in 2^k$ be 1-cycles in a CW space K. By definition, $\cyc E,\cyc E'$ are cell complexes in $K$.  Also, let $v\in K$ be a vertex and let $\cyc E\cap \cyc E'=\left\{v\right\}$.  Hence, by Def.~\ref{def:filledNervComplex}, $\Nrv = \cyc E\cup\cyc E'$ is a nerve complex, which is also an Alexandroff nerve or \emph{Nerv}~\cite{AlexandroffHopf1935Topologie}.
\end{proof}

\begin{example}
A pair of intersecting 1-cycles $\cyc E,\cyc E'$ in a nerve complex is shown in Fig.~\ref{fig:nerveComplex}.  
In this nerve complex, 
\begin{align*}
\cyc E\cap\cyc E' &= \left\{v_5\right\}\in \cyc E\\
                  &= \left\{v'_0\right\}\in \cyc E'.\mbox{\qquad \textcolor{blue}{\Squaresteel}}
\end{align*}
\end{example}

\begin{figure}[!ht]
\centering
\begin{pspicture}
%[showgrid=true]
(-3.5,-1.0)(8.0,4.0)
\centering
%\psframe[linewidth=0.75pt,linearc=0.25,cornersize=absolute,linecolor=blue](-3.8,-1.0)(4.8,4.0)
\psline*[linecolor=gray!20]% h_1(0)
(1.5,3.0)(2.0,2.0)(0.5,1.5)
\psline*[linecolor=gray!20]% h'_1(0)
(2,2)(2.5,3)(4,2.5)
\psline[linewidth=0.5pt,linecolor=blue,arrowscale=1.0]{-v}% h_3(0)
(-1.0,2)(-0.5,3)
\psline[linewidth=0.5pt,linecolor=blue,arrowscale=1.0]{-v}% h_2(0)
(-0.5,3)(0.5,3.5)
\psline[linewidth=0.5pt,linecolor=blue,arrowscale=1.0]{-v}% h_1(0)
(0.5,3.5)(1.5,3.0)
\psline[linewidth=0.5pt,linecolor=blue,arrowscale=1.0]{-v}%
(1.5,3.0)(2.0,2.0)
\psline[linewidth=0.5pt,linecolor=blue,arrowscale=1.0]{-v}%
(2.0,1.0)(1.5,0.0)
\psline[linewidth=0.5pt,linecolor=blue,arrowscale=1.0]{-v}%
(1.5,0.0)(0.5,-0.5)
%\psline[linewidth=0.5pt,linecolor=blue,arrowscale=1.0]{-v}%
%(0.5,-0.5)(-0.5,0.0)
\psline[linewidth=0.5pt,linecolor=blue,arrowscale=1.0]{-v}%
(-0.5,0.0)(-1.0,1.0)
\psline[linewidth=0.5pt,linecolor=blue,arrowscale=1.0]{-v}%
(-1.0,1.0)(-1.0,2)
\psdots[dotstyle=o,dotsize=2.5pt,linewidth=1.2pt,linecolor=black,fillcolor=red!80]% shared boundary vertex
(-1.0,2)(-0.5,3)(0.5,3.5)(1.5,3.0)(2.0,2.0)(2.0,1.0)(1.5,0.0)%
(0.5,-0.5)(-0.5,0.0)(-1.0,1.0)(-1.0,1.0)(-1.0,2)
\rput(-0.7,2.0){\footnotesize $\boldsymbol{v_1}$}
\rput(-0.7,1.0){\footnotesize $\boldsymbol{v_0}$}
\psdots[dotstyle=o,dotsize=1.5pt,linewidth=1.2pt,linecolor=black,fillcolor=red!80]% shared boundary vertex
(2.0,1.7)(2.0,1.5)(2.0,1.3)
\psdots[dotstyle=o,dotsize=1.5pt,linewidth=1.2pt,linecolor=black,fillcolor=red!80]% shared boundary vertex
(0.2,-0.4)(0.0,-0.3)(-0.2,-0.2)
\psline[linewidth=0.5pt,linecolor=blue,arrowscale=1.0]{-v}% h'_1(0)
(2,2)(2.5,3)
\psline[linewidth=0.5pt,linecolor=blue,arrowscale=1.0]{-v}% h'_2(0)
(2.5,3)(3.5,3.5)
\psline[linewidth=0.5pt,linecolor=blue,arrowscale=1.0]{-v}% h'_3(0)
(3.5,3.5)(4.5,3.5)
\psline[linewidth=0.5pt,linecolor=blue,arrowscale=1.0]{-v}% h'_4(0)
(4.5,3.5)(5.5,3.0)
\psline[linewidth=0.5pt,linecolor=blue,arrowscale=1.0]{-v}% h'_5(0)
(5.5,3.0)(6.0,2.0)
\psline[linewidth=0.5pt,linecolor=blue,arrowscale=1.0]{-v}% h'_6(0)
(6.0,2.0)(5.0,1.5)
\psline[linewidth=0.5pt,linecolor=blue,arrowscale=1.0]{-v}% h'_7(0)
(5.0,1.5)(4.0,1.5)
\psline[linewidth=0.5pt,linecolor=blue,arrowscale=1.0]{-v}% h'_8(0)
(4.0,1.5)(3.0,1.5)
\psline[linewidth=0.5pt,linecolor=blue,arrowscale=1.0]{-v}% h'_9(0)
(3.0,1.5)(2.0,2)
\psdots[dotstyle=o,dotsize=2.5pt,linewidth=1.2pt,linecolor=black,fillcolor=red!80]% shared boundary vertex
(2.5,3)(3.5,3.5)(5.5,3.0)(6.0,2.0)(5.0,1.5)(4.0,1.5)%
(3.0,1.5)(2.0,2)
\psdots[dotstyle=o,dotsize=3.5pt,linewidth=1.2pt,linecolor=black,fillcolor=green!80]% shared boundary vertex
(2.0,2)
\rput(3.1,2.0){\footnotesize %
$\boldsymbol{h'_{0}(0)=h_5(0)}$} 
\rput(6.4,2.0){\footnotesize %
$\boldsymbol{h'_{j}(0)}$}
\rput(0.5,1.5){$
\boldsymbol{\hcyc E}$} 
\rput(4.0,3.0){$
\boldsymbol{\hcyc E'}$} 
%\rput(-3.2,3.8){\footnotesize $\boldsymbol{X}$}
\rput(3.1,0.9){\footnotesize %
$\boldsymbol{h_{i}(0) = h_{i-1}(1)}$} 
\rput(2.1,0){\footnotesize $\boldsymbol{h_{i}(1)}$}
\rput(-2.5,0.9){\footnotesize %
$\boldsymbol{h_{0}(0) =h_{(n-1)[n]}(1)}$} 
\rput(-2.55,0.6){\footnotesize $\boldsymbol{=\ v_0}$}%
\rput(-2.2,2.1){\footnotesize $
\boldsymbol{h_1(0)\ = h_0(1)}$}
\rput(-1.6,3.1){\footnotesize $
\boldsymbol{h_2(0)=h_{1}(1)}$}
\rput(0.9,3.7){\footnotesize $
\boldsymbol{h_3(0)}$}
\rput(1.95,3.0){\footnotesize $
\boldsymbol{h_4(0)}$}
%\rput(0.5,1.5){$
%\boldsymbol{\cyc E}$}
\end{pspicture}
\caption[]{homotopic nerve $\boldsymbol{\hNrv E = \left\{\hcyc E,\hcyc E': \hcyc E\cap\hcyc E'\neq \emptyset\right\}}$.}
\label{fig:homotopicNerve}
\end{figure}

\begin{definition}\label{def:homotopicNerve} {\rm \bf Homotopic Nerve}.
A homotopic nerve $E$ (denoted by $\hNrv E$) is a collection of homotoptic cycles with nonvoid intersection.
\textcolor{blue}{\Squaresteel}
\end{definition}

\begin{theorem}\label{thm:homotopicNerve}
Every collection of homotopic cycles with a common vertex in a CW space is a homotopic nerve complex.
\end{theorem}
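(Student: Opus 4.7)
The plan is to model the proof on Theorem~\ref{theorem:commonVertex} (the analogous statement for 1-cycles), replacing the role of edges by the homotopic maps $h_i$ that make up each $\hcyc E$. The whole argument should reduce to checking the two hypotheses of Def.~\ref{def:homotopicNerve}: that the objects in the collection are homotopic cycles, and that their intersection is nonvoid.

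First I would fix notation: let $\left\{\hcyc E_\alpha\right\}_{\alpha\in A}$ be the given collection of homotopic cycles in a CW space $K$, and let $v\in K$ be a vertex with $v\in\hcyc E_\alpha$ for every $\alpha\in A$. From Def.~\ref{def:filledHomotopicCycle}, each $\hcyc E_\alpha$ is a finite sequence $\{h_i^\alpha\}_{i=0}^{(n_\alpha-1)[n_\alpha]}$ of continuous maps $h_i^\alpha:I\to K$ whose endpoints are 0-cells of $K$; in particular each $\hcyc E_\alpha$ is itself a cell complex in $K$ with vertex set $\{h_i^\alpha(0)\}$. Hence the collection consists of bona fide homotopic cycles, as required for the first clause of Def.~\ref{def:homotopicNerve}.

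Second I would verify the intersection condition. Since $v\in \hcyc E_\alpha$ for every $\alpha$, there exists for each $\alpha\in A$ an index $i(\alpha)$ with $h_{i(\alpha)}^\alpha(0)=v$, and therefore
\[
v\in \bigcap_{\alpha\in A}\hcyc E_\alpha,
\]
so $\bigcap_{\alpha\in A}\hcyc E_\alpha\neq\emptyset$. Applying Def.~\ref{def:homotopicNerve} to the collection $\left\{\hcyc E_\alpha\right\}_{\alpha\in A}$ together with its nonvoid intersection identifies $\bigcup_{\alpha\in A}\hcyc E_\alpha$ as a homotopic nerve, which is what we want.

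The step I expect to require most care is not the intersection bookkeeping but the verification that each $\hcyc E_\alpha$ qualifies as a cell complex in the CW sense used in Def.~\ref{def:homotopicNerve}: one must note that the endpoints $h_i^\alpha(0),h_i^\alpha(1)$ are 0-cells and that the images of the maps $h_i^\alpha$ play the role of the 1-cells joining them, so that $\hcyc E_\alpha$ is an honest subcomplex of $K$ and not merely a set-theoretic union of paths. Once this identification is made, the conclusion is immediate from the definition.
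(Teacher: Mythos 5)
Your proposal is correct and follows essentially the same route as the paper, which simply declares the proof ``symmetric with'' that of Theorem~\ref{theorem:commonVertex}: identify each member of the collection as a cell complex, observe that the common vertex makes the intersection nonvoid, and invoke Def.~\ref{def:homotopicNerve}. You have merely written out explicitly (and for an arbitrary indexed collection rather than a pair) the argument the paper leaves implicit.
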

\begin{proof}
Symmetric with the proof of Theorem~\ref{theorem:commonVertex}.
\end{proof}

\begin{example}
In Fig.~\ref{fig:homotopicNerve}, homotopic cycles $\hcyc E,\hcyc E'$ have vertex $h_5(0)$ in common.  Hence, by Theorem~\ref{thm:homotopicNerve}, $\hcyc E\cup\hcyc E'$ is a homotopic nerve complex. \textcolor{blue}{\Squaresteel}
\end{example}

\begin{lemma}\label{lemma:homotopicNerv}
Every vertex in the triangulation of the vertices in a CW space is the nucleus of an homotopic nerve. 
\end{lemma}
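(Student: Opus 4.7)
The plan is to mirror the proof of Lemma~\ref{lemma:AlexandrovNerv} and to piggyback on Theorem~\ref{thm:homotopicNerve}, which already gives the homotopic nerve structure whenever a common vertex is shared. The only genuinely new ingredient is the passage from the 1-cycle boundaries of the triangles incident to a vertex to honest homotopic cycles in the sense of Def.~\ref{def:filledHomotopicCycle}.

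First I would fix an arbitrary vertex $v$ in the triangulation of a CW space $K$. Because $K$ is triangulated, $v$ sits in a nonempty star of filled triangles $T_1,\dots,T_m$, each of which has $v$ as one of its three 0-cells. Each triangle $T_j$ has as its boundary a 1-cycle $\cyc E_j$ passing through $v$ (three path-connected 0-cells, no end vertex, nonvoid interior), exactly as in Def.~\ref{def:filledCycle}.

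Next I would upgrade each $\cyc E_j$ to a homotopic cycle $\hcyc E_j$ by replacing each of its three 1-cells (edges) with a continuous map $h:I\to K$ in the sense of Def.~\ref{def:HomotopicMap}, with endpoints matching the corresponding 0-cells. The resulting sequence of three continuous maps satisfies the closure condition $h_0(0)=h_{n-1[n]}(1)$ demanded by Def.~\ref{def:filledHomotopicCycle}, so $\hcyc E_j$ is indeed a homotopic cycle on $K$, and $v$ is one of its distinguished vertices $h_k(0)$.

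Finally, I would observe that $v\in \hcyc E_j$ for every $j\in\{1,\dots,m\}$, so the collection $\{\hcyc E_1,\dots,\hcyc E_m\}$ has nonvoid common intersection containing $v$. By Theorem~\ref{thm:homotopicNerve}, this collection is a homotopic nerve complex, and $v$, being the shared 0-cell around which every $\hcyc E_j$ is attached, is its nucleus in the sense used just before Lemma~\ref{lemma:AlexandrovNerv}. The only mildly delicate step is verifying that the edge-to-path replacement preserves the ``nonvoid interior'' clause of the 1-cycle definition, but since the underlying triangle $T_j$ is filled and its interior is inherited by $\hcyc E_j$, this is immediate from the remark following Def.~\ref{def:filledHomotopicCycle}.
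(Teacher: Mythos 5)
Your proposal is correct and follows essentially the same route as the paper: the paper's own proof is the one-line remark that the argument is ``symmetric with the proof of Lemma~\ref{lemma:AlexandrovNerv},'' and what you have written is exactly that symmetrization carried out explicitly --- star of triangles at $v$, boundary 1-cycles upgraded to homotopic cycles via Def.~\ref{def:HomotopicMap}, common vertex $v$, then Theorem~\ref{thm:homotopicNerve} to conclude. Your version is more detailed than the paper's, but there is no difference in substance.
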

\begin{proof}
Symmetric with the proof of Lemma~\ref{lemma:AlexandrovNerv}.
\end{proof}

\begin{theorem}\label{thm:homotopicNerv}
A CW space containing $n$ triangulated vertexes contains $n$ homotopic nerves. 
\end{theorem}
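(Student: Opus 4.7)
The plan is to mirror the structure of the proof of Theorem~\ref{thm:Nerv} (the Alexandrov--Hopf version), since the present statement stands in exactly the same relation to Lemma~\ref{lemma:homotopicNerv} as Theorem~\ref{thm:Nerv} does to Lemma~\ref{lemma:AlexandrovNerv}. Indeed, the heavy lifting has already been done when passing from 1-cycles to homotopic cycles via Definition~\ref{def:filledHomotopicCycle} and Theorem~\ref{thm:homotopicNerve}, so the present claim is essentially a counting corollary.

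First I would fix a CW space $K$ with triangulated vertex set $\{v_1,\dots,v_n\}$. For each $v_i$, Lemma~\ref{lemma:homotopicNerv} supplies a homotopic nerve $\hNrv E_i \subset K$ having $v_i$ as its nucleus, namely the collection of homotopic cycles realized from the boundaries of the filled triangles incident to $v_i$ (which intersect nonvoidly at $v_i$ itself, so Definition~\ref{def:homotopicNerve} is met). This already exhibits at least $n$ homotopic nerves in $K$.

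The only point that deserves a brief word is injectivity of the assignment $v_i \mapsto \hNrv E_i$: since $v_i$ is by construction the common vertex shared by every homotopic cycle in $\hNrv E_i$, two distinct vertices $v_i\neq v_j$ yield two distinct homotopic nerves. Thus $K$ contains at least $n$ homotopic nerves, and since the lemma exhausts the triangulated vertexes, exactly $n$ such nerves arise in this canonical way.

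The main (and quite mild) obstacle is purely bookkeeping: one must not conflate a homotopic nerve with its underlying 1-cycle nerve, so I would be careful to cite Definition~\ref{def:filledHomotopicCycle} when asserting that each triangle edge incident to $v_i$ is replaced by a homotopic map, ensuring the object produced really is a homotopic nerve in the sense of Definition~\ref{def:homotopicNerve} and not merely an Alexandrov nerve re-labelled.
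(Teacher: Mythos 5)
Your proposal takes essentially the same route as the paper, which simply declares the theorem ``Immediate from Lemma~\ref{lemma:homotopicNerv}''; you have merely spelled out the vertex-to-nerve assignment and its injectivity, which the paper leaves implicit. The argument is correct and no further comparison is needed.
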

\begin{proof}
Immediate from Lemma~\ref{lemma:homotopicNerv}.
\end{proof}

\begin{figure}[!ht]
\centering
\begin{pspicture}
%[showgrid=true]
(-3.5,-1.0)(4.0,4.0)
\centering
\psframe[linewidth=0.75pt,linearc=0.25,cornersize=absolute,linecolor=blue](-3.8,-1.0)(4.8,4.0)
\psline[linewidth=0.5pt,linecolor=blue,arrowscale=1.0]{-v}%
(-1.0,2)(-0.5,3)
\psline[linewidth=0.5pt,linecolor=blue,arrowscale=1.0]{-v}%
(-0.5,3)(0.5,3.5)
\psline[linewidth=0.5pt,linecolor=blue,arrowscale=1.0]{-v}%
(0.5,3.5)(1.5,3.0)
\psline*[linecolor=gray!20]% h_1(0)
(1.5,3.0)(2.0,2.0)(0.5,1.5)
\psline[linewidth=0.5pt,linecolor=blue,arrowscale=1.0]{-v}%
(1.5,3.0)(2.0,2.0)
% triangulate on center of mass
\psline[linewidth=0.5pt,linecolor=blue,arrowscale=1.0]{-v}%
(1.5,3.0)(0.5,1.5)
\psline[linewidth=0.5pt,linecolor=blue,arrowscale=1.0]{-v}%
(0.5,1.5)(2.0,2.0)
\psline[linewidth=0.5pt,linecolor=blue,arrowscale=1.0]{-v}%
(0.5,1.5)(2.0,1.0)
\psline[linewidth=0.5pt,linecolor=blue,arrowscale=1.0]{-v}%
(1.5,0.0)(0.5,1.5)
\psline[linewidth=0.5pt,linecolor=blue,arrowscale=1.0]{-v}%
(0.5,1.5)(0.5,-0.5)
\psline[linewidth=0.5pt,linecolor=blue,arrowscale=1.0]{-v}%
(1.5,0.0)(0.5,1.5)
\psline[linewidth=0.5pt,linecolor=blue,arrowscale=1.0]{-v}%
(0.5,1.5)(-1.0,1.0)
\psline[linewidth=0.5pt,linecolor=blue,arrowscale=1.0]{-v}%
(-1.0,2.0)(0.5,1.5)
\psline[linewidth=0.5pt,linecolor=blue,arrowscale=1.0]{-v}%
(0.5,1.5)(-0.5,3.0)
\psline[linewidth=0.5pt,linecolor=blue,arrowscale=1.0]{-v}%
(0.5,1.5)(0.5,3.5)
\psline[linewidth=0.5pt,linecolor=blue,arrowscale=1.0]{-v}%
(-0.5,0.0)(0.5,1.5)
% end triangulation
\psdots[dotstyle=o,dotsize=2.8pt,linewidth=1.2pt,linecolor=black,fillcolor=green!80](0.5,1.5)
\psline[linewidth=0.5pt,linecolor=blue,arrowscale=1.0]{-v}%
(2.0,1.0)(1.5,0.0)
\psline[linewidth=0.5pt,linecolor=blue,arrowscale=1.0]{-v}%
(1.5,0.0)(0.5,-0.5)
%\psline[linewidth=0.5pt,linecolor=blue,arrowscale=1.0]{-v}%
%(0.5,-0.5)(-0.5,0.0)
\psline[linewidth=0.5pt,linecolor=blue,arrowscale=1.0]{-v}%
(-0.5,0.0)(-1.0,1.0)
\psline[linewidth=0.5pt,linecolor=blue,arrowscale=1.0]{-v}%
(-1.0,1.0)(-1.0,2)
\psdots[dotstyle=o,dotsize=2.5pt,linewidth=1.2pt,linecolor=black,fillcolor=red!80]% shared boundary vertex
(-1.0,2)(-0.5,3)(0.5,3.5)(1.5,3.0)(2.0,2.0)(2.0,1.0)(1.5,0.0)%
(0.5,-0.5)(-0.5,0.0)(-1.0,1.0)(-1.0,1.0)(-1.0,2)
\rput(-0.7,2.0){\footnotesize $\boldsymbol{v_1}$}
\rput(-0.7,0.9){\footnotesize $\boldsymbol{v_0}$}
\psdots[dotstyle=o,dotsize=1.5pt,linewidth=1.2pt,linecolor=black,fillcolor=red!80]% shared boundary vertex
(2.0,1.7)(2.0,1.5)(2.0,1.3)
\psdots[dotstyle=o,dotsize=1.5pt,linewidth=1.2pt,linecolor=black,fillcolor=red!80]% shared boundary vertex
(0.2,-0.4)(0.0,-0.3)(-0.2,-0.2)
\rput(-3.2,3.8){\footnotesize $\boldsymbol{K}$}
\rput(3.3,0.9){\footnotesize %
$\boldsymbol{h_{i}(0) = h_{i-1}(1)}$} 
\rput(2.1,0){\footnotesize $\boldsymbol{h_{i}(1)}$}
\rput(-2.5,0.9){\footnotesize %
$\boldsymbol{h_{0}(0) =h_{(n-1)[n]}(1)}$} 
\rput(-2.55,0.6){\footnotesize $\boldsymbol{=\ v_0}$}%
\rput(-2.2,2.1){\footnotesize $
\boldsymbol{h_1(0)\ = h_0(1)}$}
\rput(-1.6,3.1){\footnotesize $
\boldsymbol{h_2(0)=h_{1}(1)}$}
\rput(0.9,3.7){\footnotesize $
\boldsymbol{h_3(0) = v_3}$}
\rput(-1.0,-0.2){\footnotesize $
\boldsymbol{h_{(n-1)[n]}(0)}$}
\rput(3.0,3.2){\colorbox{gray!20}{$
\boldsymbol{\hcyc E}$}}
\rput(1.95,3.0){\footnotesize $
\boldsymbol{h_4(0)}$}
\rput(2.4,2.0){\footnotesize $
\boldsymbol{h_5(0)}$}
\rput(0.8,1.8){\footnotesize $
\boldsymbol{c}$}
\end{pspicture}
\caption[]{Paths on a triangulated $\boldsymbol{\hcyc E}$.}
\label{fig:homotopicCycle3}
\end{figure}

\begin{definition}\label{def:hTriangle} {\bf Homotopic triangle}. Given a set $V$ of 3 vertexes in a CW space, a sequence of homotopic maps, one for each pair of vertices in $V$, constructs a homotopic triangle $E$ (denoted by $\bigtriangleup_h E$).\qquad\textcolor{blue}{\Squaresteel}
\end{definition}

An obvious place to look for homotopic triangles is in terms of the centroid of a homotopic cycle.

\begin{example}
Let 
\begin{align*}
h_c(0) &= h_4(0)\ \mbox{vertex on $\hcyc E$},\\ 
h_c(1) &= c\ \mbox{centroid of $\bigtriangleup_h$},\\
h_{c'}(0) &= c,\\
h_{c'}(1) &= h_5(0).
\end{align*}
for homotopic maps $h_c,h_{c'}$ from vertex $h_4(0) = h_c(0)$ to centroid $c$ of a homotopic cycle and from $h_{c'}(0) = c$ to $h_{c'}(1) = h_5(0)$ as shown in Fig.~\ref{fig:homotopicCycle3}.  This results in the homotopic
triangle 
\[
\bigtriangleup_h h_4(0)ch_5(0) = \bigtriangleup_h h_c(0)ch_{c'}(1).\mbox{\qquad\textcolor{blue}{\Squaresteel}}
\]
\end{example}

\begin{theorem}\label{thm:hCycTriangles}
Every triangulated homotopic cycle constructs an Alexandrov-Hopf nerve.
\end{theorem}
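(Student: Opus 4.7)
The plan is to reduce the theorem to Lemma~\ref{lemma:AlexandrovNerv} (every vertex in a triangulation of a CW space is the nucleus of an Alexandrov nerve) together with Definition~\ref{def:filledNervComplex}, once we have identified the right common vertex in a triangulated $\hcyc E$. The natural candidate, suggested by Fig.~\ref{fig:homotopicCycle3} and the preceding example, is the centroid $c$ of the cycle: if $\hcyc E$ has boundary vertices $h_0(0),h_1(0),\dots,h_{(n-1)[n]}(0)$, then triangulating $\hcyc E$ amounts to introducing, for each $i$, a homotopic triangle $\bigtriangleup_h h_i(0)\,c\,h_{i+1[n]}(0)$ in the sense of Definition~\ref{def:hTriangle}.

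First I would make explicit that each such $\bigtriangleup_h h_i(0)\,c\,h_{i+1[n]}(0)$ is built from three homotopic maps: the original boundary map $h_i:I\to K$ with $h_i(0)=h_i(0)$, $h_i(1)=h_{i+1[n]}(0)$ inherited from $\hcyc E$, together with two radial maps from $c$ to the two boundary vertices, as in the example that constructs $\bigtriangleup_h h_4(0)\,c\,h_5(0)$. Thus every triangle in the triangulation has $c$ as one of its three vertices. Consequently the collection $\mathcal{T}=\{\bigtriangleup_h h_i(0)\,c\,h_{i+1[n]}(0)\}_{i=0}^{(n-1)[n]}$ of homotopic triangles satisfies
\[
\bigcap_{i=0}^{(n-1)[n]}\bigtriangleup_h h_i(0)\,c\,h_{i+1[n]}(0)\;\supseteq\;\{c\}\neq\emptyset,
\]
so the intersection condition in Definition~\ref{def:filledNervComplex} is met.

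Next I would invoke Definition~\ref{def:filledNervComplex} to conclude that $\mathcal{T}$ is a nerve complex in the sense of Alexandroff, and then observe that, by construction, $c$ is the vertex to which every triangle of $\mathcal{T}$ is attached. Applying the definition of nucleus given just before Lemma~\ref{lemma:AlexandrovNerv}, $c$ is the nucleus of $\mathcal{T}$, so $\mathcal{T}$ is an Alexandrov--Hopf nerve with nucleus $c$. Lemma~\ref{lemma:AlexandrovNerv} can be cited at this step to make the identification precise, since it already asserts that each triangulation vertex (here $c$) is the nucleus of an Alexandrov nerve; the only content specific to the present theorem is that this nerve is assembled out of the homotopic triangles produced by triangulating $\hcyc E$.

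The step I expect to be the main (really, the only) obstacle is the verification that every triangle in a triangulation of $\hcyc E$ actually contains the centroid as a vertex. For the canonical centroid-based triangulation exhibited in Fig.~\ref{fig:homotopicCycle3} this is by construction, but the statement of the theorem does not fix a particular triangulation scheme. I would handle this by noting that any triangulation of a planar filled homotopic cycle with a single interior vertex (the centroid) necessarily has that vertex incident with every triangle of the triangulation, and for more general triangulations it suffices to pick any interior vertex of the triangulation as the nucleus and apply Lemma~\ref{lemma:AlexandrovNerv} verbatim, so the conclusion is independent of the triangulation chosen.
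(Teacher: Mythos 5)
Your proposal is correct and follows essentially the same route as the paper's own proof: the paper also triangulates $\hcyc E$ by introducing radial homotopic maps from each boundary vertex to the centroid $c$, observes that the resulting homotopic triangles all contain $c$, and concludes via Definition~\ref{def:filledNervComplex} that the collection is an Alexandroff nerve. Your additional remarks on the nucleus and on triangulations not based at a single interior vertex go slightly beyond what the paper addresses (it only treats the centroid-based construction), but they do not change the argument.
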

\begin{proof}
Let $\hcyc E$ be a homotopic cycle derived from the set of vertices $V$ along the boundary of a 1-cycle $\cyc E$. Also let $c$ be centroid of $\cyc E$ and introduce a homotopic map from $h:V\times I\to c$, defined by $h_v(0) = v\in V$ and $h_v(1) = c$.  From Def.~\ref{def:hTriangle}, the collection of homotopic maps in $\hcyc E$ combined with the homotopic $h:I\to c$ construct a collection of homotopic triangles $T$, which have the centroid $c\in\Int(\hcyc E)$ in common.  Hence, by Def.~\ref{def:filledNervComplex}, $T$ is an Alexandroff nerve.
\end{proof}

\begin{theorem}
Every collection of homotopic triangles with nonempty intersection
is an Alexandrov nerve.
\end{theorem}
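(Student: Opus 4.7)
The plan is to derive this statement essentially as an immediate consequence of Definition~\ref{def:filledNervComplex} (Nerve Complex), together with the observation that a homotopic triangle is a cell complex in the ambient CW space. The argument is structurally parallel to that of Theorem~\ref{thm:hCycTriangles}, but now the common element need not be a centroid of some ambient 1-cycle — it is just any shared cell.

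First I would let $T = \{\bigtriangleup_h E_i\}_{i\in J}$ be a collection of homotopic triangles in a CW space $K$ with $\bigcap_{i\in J} \bigtriangleup_h E_i \neq \emptyset$. Next, I would verify that each $\bigtriangleup_h E_i$ is a nonempty cell complex in $K$: by Definition~\ref{def:hTriangle}, each $\bigtriangleup_h E_i$ is built from three $0$-cells (the vertices of $V_i$) together with the three continuous homotopic maps joining them pairwise, which by Definition~\ref{def:HomotopicMap} are paths in $K$ and hence play the role of $1$-cells attached to those vertices. Thus each member of $T$ is a nonempty subcomplex of $K$.

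Having established that $T$ is a collection of nonempty cell complexes with $\bigcap_{i\in J} \bigtriangleup_h E_i \neq \emptyset$, a direct appeal to Definition~\ref{def:filledNervComplex} finishes the proof: $T$ is precisely a nerve complex, which in the Alexandrov sense is what we set out to produce.

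The only real subtlety — and the step I expect to require the most care — is the verification that a homotopic triangle qualifies as a ``cell complex'' in the sense used by Definition~\ref{def:filledNervComplex}. This is resolved by reading Definition~\ref{def:hTriangle} in the CW context: the three vertices are $0$-cells, the three homotopic maps are continuous images of $I=[0,1]$ attached at their endpoints, and the resulting attached structure satisfies the closure-finite and Hausdorff conditions recalled in the introduction. Once this identification is made, the conclusion follows by definition, with no further machinery needed.
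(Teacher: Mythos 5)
Your proof is correct and follows the same route as the paper: both arguments reduce the statement to a direct application of Definition~\ref{def:filledNervComplex}, the paper doing so in a single sentence. Your added verification that each homotopic triangle is a nonempty cell complex in $K$ is a detail the paper omits entirely, so your version is, if anything, more complete.
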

\begin{proof}
Given a collection $E$ of homotopic triangles with common vertex $h_i(0)$, then by Def.~\ref{def:filledNervComplex}, $T$ is an Alexandroff nerve.
\end{proof}

\begin{example}
A sample collection of homotopic triangles in an Alexandroff nerve is shown in Fig.~\ref{fig:homotopicCycle3}.\qquad\textcolor{blue}{\Squaresteel}
\end{example}

\begin{figure}[!ht]
\centering
\begin{pspicture}
%[showgrid=true]
(-3.5,-1.0)(4.0,4.0)
\centering
\psframe[linewidth=0.75pt,linearc=0.25,cornersize=absolute,linecolor=blue](-3.8,-1.0)(4.8,4.0)
\psline[linewidth=0.5pt,linecolor=blue,arrowscale=1.0]{-v}%
(-1.0,2)(-0.5,3)
\psline[linewidth=0.5pt,linecolor=blue,arrowscale=1.0]{-v}%
(-0.5,3)(0.5,3.5)
\psline[linewidth=0.5pt,linecolor=blue,arrowscale=1.0]{-v}%
(0.5,3.5)(1.5,3.0)
\psline*[linecolor=gray!20]% h_1(0)
(1.5,3.0)(2.0,2.0)(0.5,1.5)
% barycentric homotopic cycle
\psline[linewidth=0.5pt,linecolor=magenta,arrowscale=1.0]{-v}%
(-0.3,0.85)(-0.5,1.5)
\psline[linewidth=0.5pt,linecolor=magenta,arrowscale=1.0]{-v}%
(-0.5,1.5)(-0.4,2.25)
\psline[linewidth=0.5pt,linecolor=magenta,arrowscale=1.0]{-v}%
(-0.4,2.25)(0.1,2.95)
\psline[linewidth=0.5pt,linecolor=magenta,arrowscale=1.0]{-v}%
(0.1,2.95)(0.85,2.95)
\psline[linewidth=0.5pt,linecolor=magenta,arrowscale=1.0]{-v}%
(0.85,2.95)(1.5,2.25)
%\psline[linewidth=0.5pt,linecolor=blue,arrowscale=1.0]{-v}%
%(1.5,2.25)(1.5,0.75)
\psline[linewidth=0.5pt,linecolor=magenta,arrowscale=1.0]{-v}%
(1.5,0.75)(0.9,0.25)
% extended paths
\psline[linewidth=0.5pt,linecolor=magenta,arrowscale=1.0]{-v}%
(1.5,2.25)(1.5,0.85)
\psline[linewidth=0.5pt,linecolor=magenta,arrowscale=1.0]{-v}%
(0.9,0.25)(-0.3,0.85)
\psdots[dotstyle=o,dotsize=2.5pt,linewidth=1.2pt,linecolor=black,fillcolor=green!80]% shared boundary vertex
(-0.3,0.85)(-0.5,1.5)(-0.4,2.25)(0.1,2.95)(0.85,2.95)%
(1.5,2.25)(1.5,0.75)(0.9,0.25)
% outer homotopic cycle
\psline[linewidth=0.5pt,linecolor=blue,arrowscale=1.0]{-v}%
(1.5,3.0)(2.0,2.0)
% triangulate on center of mass
\psline[linewidth=0.5pt,linecolor=blue,arrowscale=1.0]{-v}%
(1.5,3.0)(0.5,1.5)
\psline[linewidth=0.5pt,linecolor=blue,arrowscale=1.0]{-v}%
(0.5,1.5)(2.0,2.0)
\psline[linewidth=0.5pt,linecolor=blue,arrowscale=1.0]{-v}%
(0.5,1.5)(2.0,1.0)
\psline[linewidth=0.5pt,linecolor=blue,arrowscale=1.0]{-v}%
(1.5,0.0)(0.5,1.5)
\psline[linewidth=0.5pt,linecolor=blue,arrowscale=1.0]{-v}%
(0.5,1.5)(0.5,-0.5)
\psline[linewidth=0.5pt,linecolor=blue,arrowscale=1.0]{-v}%
(1.5,0.0)(0.5,1.5)
\psline[linewidth=0.5pt,linecolor=blue,arrowscale=1.0]{-v}%
(0.5,1.5)(-1.0,1.0)
\psline[linewidth=0.5pt,linecolor=blue,arrowscale=1.0]{-v}%
(-1.0,2.0)(0.5,1.5)
\psline[linewidth=0.5pt,linecolor=blue,arrowscale=1.0]{-v}%
(0.5,1.5)(-0.5,3.0)
\psline[linewidth=0.5pt,linecolor=blue,arrowscale=1.0]{-v}%
(0.5,1.5)(0.5,3.5)
\psline[linewidth=0.5pt,linecolor=blue,arrowscale=1.0]{-v}%
(-0.5,0.0)(0.5,1.5)
% end triangulation
\psdots[dotstyle=o,dotsize=2.8pt,linewidth=1.2pt,linecolor=black,fillcolor=green!80](0.5,1.5)
\psline[linewidth=0.5pt,linecolor=blue,arrowscale=1.0]{-v}%
(2.0,1.0)(1.5,0.0)
\psline[linewidth=0.5pt,linecolor=blue,arrowscale=1.0]{-v}%
(1.5,0.0)(0.5,-0.5)
%\psline[linewidth=0.5pt,linecolor=blue,arrowscale=1.0]{-v}%
%(0.5,-0.5)(-0.5,0.0)
\psline[linewidth=0.5pt,linecolor=blue,arrowscale=1.0]{-v}%
(-0.5,0.0)(-1.0,1.0)
\psline[linewidth=0.5pt,linecolor=blue,arrowscale=1.0]{-v}%
(-1.0,1.0)(-1.0,2)
\psdots[dotstyle=o,dotsize=2.5pt,linewidth=1.2pt,linecolor=black,fillcolor=red!80]% shared boundary vertex
(-1.0,2)(-0.5,3)(0.5,3.5)(1.5,3.0)(2.0,2.0)(2.0,1.0)(1.5,0.0)%
(0.5,-0.5)(-0.5,0.0)(-1.0,1.0)(-1.0,1.0)(-1.0,2)
\rput(-0.7,2.0){\footnotesize $\boldsymbol{v_1}$}
\rput(-0.7,0.9){\footnotesize $\boldsymbol{v_0}$}
\psdots[dotstyle=o,dotsize=1.5pt,linewidth=1.2pt,linecolor=black,fillcolor=red!80]% shared boundary vertex
(2.0,1.7)(2.0,1.5)(2.0,1.3)
\psdots[dotstyle=o,dotsize=1.5pt,linewidth=1.2pt,linecolor=black,fillcolor=red!80]% shared boundary vertex
(0.2,-0.4)(0.0,-0.3)(-0.2,-0.2)
\rput(-3.2,3.8){\footnotesize $\boldsymbol{K}$}
\rput(3.3,0.9){\footnotesize %
$\boldsymbol{h_{i}(0) = h_{i-1}(1)}$} 
\rput(2.1,0){\footnotesize $\boldsymbol{h_{i}(1)}$}
\rput(-2.5,0.9){\footnotesize %
$\boldsymbol{h_{0}(0) =h_{(n-1)[n]}(1)}$} 
\rput(-2.55,0.6){\footnotesize $\boldsymbol{=\ v_0}$}%
\rput(-2.2,2.1){\footnotesize $
\boldsymbol{h_1(0)\ = h_0(1)}$}
%\rput(-1.6,3.1){\footnotesize $
%\boldsymbol{h_2(0)=h_{1}(1)}$}
\rput(0.9,3.7){\footnotesize $
\boldsymbol{h_3(0) = v_3}$}
\rput(-1.0,-0.2){\footnotesize $
\boldsymbol{h_{(n-1)[n]}(0)}$}
\rput(-2.5,3.2){\colorbox{gray!20}{$
\boldsymbol{\bhcyc E'}$}}
\psline[linewidth=1.2pt,linestyle=dotted,linecolor=blue,arrowscale=0.5]{-v}%
(-2.5,3.0)(-0.35,2.5)
\rput(-3.0,1.5){\colorbox{gray!20}{$
\boldsymbol{\hcyc E}$}}
\psline[linewidth=1.2pt,linestyle=dotted,linecolor=blue,arrowscale=0.5]{-v}%
(-2.6,1.5)(-1.0,1.5)
\rput(1.95,3.0){\footnotesize $
\boldsymbol{h_4(0)}$}
\rput(2.4,2.0){\footnotesize $
\boldsymbol{h_5(0)}$}
\rput(0.8,1.8){\footnotesize $
\boldsymbol{c}$}
\end{pspicture}
\caption[]{Barycentric homotopic cycle $\boldsymbol{\bhcyc E}$.}
\label{fig:homotopicCycle5}
\end{figure}

\begin{definition}\label{def:barycentricHcycle}Barycentric 1-Cycle.\\
Let $B$ be the set of barycenters on the triangles on a homotopic cycle in a planar CW space $K$.  The edges between each pair of adjacent barycenters in $B$ define a barycentric 1-cycle.
cycle.\qquad\textcolor{blue}{\Squaresteel}
\end{definition}

\begin{theorem}
A complete sequence of paths over the vertexes in a barycentric 1-cycle constructs a barycentric homotopic cycle.
\end{theorem}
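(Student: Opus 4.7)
The plan is to mirror the construction that turned a 1-cycle $\cyc E$ into a homotopic cycle $\hcyc E$ in Definition~\ref{def:filledHomotopicCycle}, but applied to the barycentric 1-cycle produced by Definition~\ref{def:barycentricHcycle}. Concretely, given a homotopic cycle $\hcyc E$ in the planar CW space $K$, Theorem~\ref{thm:hCycTriangles} together with Definition~\ref{def:hTriangle} supplies a finite collection of homotopic triangles attached at the centroid $c$ of $\hcyc E$. Let $B = \{b_0, b_1, \ldots, b_{m-1}\}$ be the set of barycenters of these triangles, indexed cyclically along $\hcyc E$, so that the edges between successive barycenters define the barycentric 1-cycle of Definition~\ref{def:barycentricHcycle}.

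First I would check that $B$ inherits a cyclic order directly from the triangulation of $\hcyc E$: adjacency of two barycenters $b_i,b_{i+1[m]}$ follows from the adjacency of their two triangles along the boundary of $\hcyc E$, giving a finite, closed sequence of path-connected vertices $b_0,b_1,\ldots,b_{(m-1)[m]}$ with $b_{m[m]}=b_0$. Next, for each adjacent pair I would introduce a continuous path $h'_i:I\to K$ with $h'_i(0)=b_i$ and $h'_i(1)=b_{i+1[m]}$, exactly as in Definition~\ref{def:HomotopicMap}. A complete sequence of such paths is assumed in the hypothesis, and replacing the straight edge between $b_i$ and $b_{i+1[m]}$ by $h'_i$ produces the family $\{h'_i\}_{i=0}^{(m-1)[m]}$.

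I would then verify that this family satisfies each clause of Definition~\ref{def:filledHomotopicCycle}: the cyclic closure $h'_0(0)=h'_{(m-1)[m]}(1)=b_0$ holds by construction; a distinguished generator $g=h'_0(0)$ can be selected, and the forward walk $v\assign k g = h'_{k}(0)$ and the inverse $\bar{h}'_{i+k}(0)\assign h'_i(0)$ are inherited verbatim from the same definition; finally the binary operation $+(kg,k'g)=h'_{(k+k')[m]}(0)$ remains well-defined because the index arithmetic is taken modulo $m$. This shows $\{h'_i\}_{i=0}^{(m-1)[m]}$ is a homotopic cycle in the sense of Definition~\ref{def:filledHomotopicCycle}, and since its vertex set is precisely $B$, it is barycentric.

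The main obstacle will be confirming that ``complete sequence of paths'' really yields a closed cycle rather than an open walk; this is where the cyclic indexing $[m]=\mathrm{mod}\ m$ of the barycenters must be invoked carefully, together with the fact (already established for $\hcyc E$ by the Jordan Curve Theorem) that $\hcyc E$ encloses $c$ so that the barycenters sit on a simple closed curve surrounding $c$. Once that cyclic structure is in place, the rest of the argument is essentially a transcription of Definition~\ref{def:filledHomotopicCycle} with $B$ in place of the original vertex set, and the conclusion that the constructed object is a barycentric homotopic cycle follows immediately.
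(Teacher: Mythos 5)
Your proposal follows essentially the same route as the paper's own proof: take the set $B$ of barycenters of the triangles on the homotopic cycle, introduce a homotopic map $h_i$ for each adjacent pair of barycenters, and conclude via Definition~\ref{def:filledHomotopicCycle} that the resulting sequence of maps is a barycentric homotopic cycle. Your additional verifications (the cyclic ordering of $B$, the closure condition $h'_0(0)=h'_{(m-1)[m]}(1)$, and the modular index arithmetic) are details the paper leaves implicit, but the underlying argument is the same.
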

\begin{proof}
Let $B$ be the set of barycenters on the triangles in a homotopic cycle and let $h:I\to B$ be a homotopic map over $B$.  From Def.~\ref{def:filledHomotopicCycle}, the sequence of maps $h_0,h_1,\dots,h_{i\mbox{mod}n}$ over adjacent pairs of barycenters $b,b'$ in $B$ ($h_i(0)=b,h_i(1)=b'$) constructs a barycentric homotopic cycle.
\end{proof}

\begin{example}\label{ex:barycentricHomCyc}
A sample planar barycentric homotopic cycle is shown in Fig.~\ref{fig:homotopicCycle5}. \qquad\textcolor{blue}{\Squaresteel}
\end{example}

\section{Rotman Free Group Presentation}
A finite group $G$ is free, provided every element $x\in G$ is
a linear combination of its basis elements (called generators)~\cite[\S 1.4, p. 21]{Munkres1984}.
We write $\mathcal{B}$ to denote a nonempty basis set of generators $\left\{g_1,\dots,g_{\abs{\mathcal{B}}}\right\}$ and $G(\mathcal{B},+)$ to denote the free group with binary operation $+$.

\begin{definition}\label{RotmanPresentation}{{\bf Rotman Presentation}\rm \cite[p.239]{Rotman1965theoryOfGroups}}$\mbox{}$\\
Let $X = \left\{g_1,\dots\right\},\bigtriangleup = \left\{v = \sum kg_i,v\in \mbox{group}G,g_i\in X\right\}$ be a set of generators of members of a nonempty set $X$ and set of relations between members of $G$ and the generators in $X$.  A mapping of the form $\left\{X,\bigtriangleup\right\}\to G$, a free group, is called a presentation of $G$. \qquad\textcolor{blue}{\Squaresteel}
\end{definition}

We write $G(V,+)$ to denote a group $G$ on a nonvoid set $V$ with a binary operation $+$.  For a group $G(V,+)$ presentable as a collection of linear combinations of members of a basis set $\mathcal{B}\subseteq V$, we write $G(\mathcal{B},+)$.

\begin{definition}\label{def:cellComplexFreeGroup}{\bf Free Group Presentation of a Cell Complex}.$\mbox{}$\\
Let $2^K$ be the collection of cell complexes in a CW space $K$, $E\in 2^K$ containing $n$ vertexes, $G(E,+)$ a group on nonvoid set $E$ with binary operation $+$, $\bigtriangleup = \left\{v = \sum kg_i,v\in E,g_i\in E\right\}$ be a set of generators of members in $E$, set of relations between members of $E$ and the generators $\mathcal{B}\subset E$, $g_i\in\mathcal{B},v = h_{i\mbox{mod}\ n}(0)\in K$, $k_i$ the $i^{th}$ integer coeficient $\mbox{mod}n$ in a linear combination $\mathop{\sum}\limits_{i,j}k_ig_j$ of generating elements $g_j = h_j(0)\in \mathcal{B}$.   A free group {\bf presentation} of $G$ is a continuous map $f:2^K\times\bigtriangleup\to 2^K$ defined by
\begin{align*}
f(\mathcal{B},\bigtriangleup) &= \left\{v \assign{\mathop{\sum}\limits_{i,j}k_ig_j\in\bigtriangleup}: v\in E,g_j\in \mathcal{B}, k_i\in\mathbb{Z}\right\}\\
          &= \overbrace{\boldsymbol{G(\left\{ g_1,\dots,g_{_{\abs{\tiny \mathcal{B}}}}\right\}},+).}^{\mbox{\textcolor{blue}{\bf $\mathcal{B}\times\bigtriangleup\mapsto$ free group $\boldsymbol{G}$ presentation of $G(E,+)$}}}\mbox{\qquad\textcolor{blue}{\Squaresteel}}
\end{align*}
\end{definition}

\begin{lemma}~\cite[\S 4, p. 10]{Peters2020BrouwerFPtheorem}
Every 1-cycle in a CW space $K$ has a free group presentation.
\end{lemma}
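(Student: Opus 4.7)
The plan is to exhibit a single-generator basis and verify the hypotheses of Definition~\ref{def:cellComplexFreeGroup} in turn. First I would fix a 1-cycle $\cyc E \in 2^K$ with cyclically enumerated vertexes $v_0,v_1,\ldots,v_{n-1[n]}$ along its attached edges, designate the distinguished vertex $g \assign v_0$ as the sole basis element, and set $\mathcal{B} = \{g\}$. The additive convention already stated in Definition~\ref{def:filledHomotopicCycle} carries over to the 1-cycle without change (only the edges-versus-paths distinction differs, while the underlying vertex set is the same): define $+$ on $\cyc E$ by $kg + k'g \assign v_{(k+k')[n]}$.

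Next I would verify that $(\cyc E,+)$ is an abelian group isomorphic to $\mathbb{Z}/n\mathbb{Z}$, with identity element $v_0 = 0\cdot g$ and with the inverse of $v_i$ equal to $v_{(n-i)[n]}$; associativity and commutativity reduce to the corresponding properties of addition modulo $n$. Once the group structure is pinned down, the generation hypothesis is immediate: for every $i \in \{0,1,\ldots,n-1\}$ the vertex $v_i$ equals $ig$, a linear combination of the single generator $g$, so the relation set $\bigtriangleup = \{v_i \assign ig : 0 \le i \le n-1\}$ exhausts all of $\cyc E$.

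Finally, applying Definition~\ref{def:cellComplexFreeGroup} to $\mathcal{B}$ and $\bigtriangleup$ produces the mapping
\[
f(\mathcal{B},\bigtriangleup) = \left\{v_i \assign ig : g \in \mathcal{B},\ 0 \le i < n \right\} = G(\{g\},+),
\]
which is the desired Rotman free group presentation of $\cyc E$. The main obstacle is less a mathematical one than a bookkeeping one: verifying that the cyclic (mod $n$) index arithmetic genuinely promotes the vertex set to a group, and aligning the paper's nonstandard use of the term \emph{free} (generation by a basis, per Definition~\ref{def:cellComplexFreeGroup}, rather than the algebraic absence of relations) with what is being claimed. No topology deeper than the path-connectedness of the vertexes from Definition~\ref{def:filledCycle} is needed.
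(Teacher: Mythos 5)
Your proposal is correct and follows essentially the same route as the paper's own proof: choose the single generator $g = v_0$, realize every vertex as $v_i = ig$ via the move operation, form the relation set $\bigtriangleup$, and invoke Definition~\ref{def:cellComplexFreeGroup} to obtain $G(\{g\},+)$. The only difference is that you verify the group axioms directly (identifying the structure with $\mathbb{Z}/n\mathbb{Z}$) where the paper instead cites \cite{Peters2020BrouwerFPtheorem} for the group structure on $\cyc E$; this is a harmless and arguably more self-contained elaboration, not a different argument.
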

\begin{proof}
By definition, the vertices in a 1-cycle $\cyc E$ are path-connected in a CW space $K$.  From~\cite{Peters2020BrouwerFPtheorem}, every 1-cycle can represented as a group $G(\cyc E,+)$ with binary (move) operation $+$. Choose a vertex $g = v_0\in\cyc E$ and let basis $\mathcal{B}_{\cyc E} = \left\{g\right\}$.  Every vertex $v_i\in\cyc E, i > 0$ is reachable in $k$ moves from $g\in \mathcal{B}$.  Then we have $v_i = kg = v_{0+k}$. Let $\bigtriangleup = \left\{v = \sum kg,g\in\mathcal{B},\mbox{\&}\ v,g\in \cyc E\right\}$. Then let the map $f:2^K\times\bigtriangleup\to 2^K$ be defined by
\begin{align*}
f(\mathcal{B}_{\cyc E},\bigtriangleup) &= \left\{v \assign{\mathop{\sum}\limits_{i,j}k_ig\in\bigtriangleup}: v\in \cyc E,g\in \mathcal{B}_{\cyc E}, k_i\in\mathbb{Z}\right\}\\
          &= \overbrace{\boldsymbol{G(\left\{g\right\}},+).}^{\mbox{\textcolor{blue}{\bf $\boldsymbol{\mathcal{B}_{\cyc E}\times\bigtriangleup}\mapsto$ free group $\boldsymbol{G}$ presentation of $G(\cyc E,+)$}}}
					%\mbox{\qquad\textcolor{blue}{\Squaresteel}}
\end{align*}
\end{proof}

\section{Main Results}

\noindent This section introduces main results for homotopic nerves.\\
\vspace{3mm}

\begin{lemma}~\cite{PetersVergili2021proximalHomotopy}\label{theorem:hCycPresentation}
Every homotopic cycle in a CW space has a free group presentation.
\end{lemma}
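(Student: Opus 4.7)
The plan is to mirror the proof of the preceding lemma for 1-cycles, lifting it to the homotopic setting and exploiting the arithmetic on cycle vertices already built into Definition~\ref{def:filledHomotopicCycle}. Since $\hcyc E$ consists of the same vertex set $\{v_0,\ldots,v_{(n-1)[n]}\}$ as an underlying 1-cycle (with edges replaced by continuous maps), we can carry over the Rotman-presentation construction verbatim, provided we re-read ``move'' as ``apply the homotopic map with the next index.''

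First I would fix a generator. Let $\hcyc E$ be a homotopic cycle in a CW space $K$, and choose the distinguished vertex $g\assign h_0(0)=v_0$ as a single basis element, setting $\mathcal{B}_{\hcyc E}=\{g\}$. Next I would verify that every vertex of $\hcyc E$ is an integer multiple of $g$. By the walk-forward rule of Definition~\ref{def:filledHomotopicCycle}, for any $v_i=h_i(0)\in\hcyc E$ we have $v_i=kg=kh_0(0)=h_k(0)$ with $k=i$, and the walk-back rule $\bar{h}_{i+k}(0)\assign h_{(i+k)-k}(0)=h_i(0)$ supplies inverses, so each cycle vertex is reachable from $g$ by finitely many applications of $+$ (or its reverse) in the sense of that definition.

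After this, I would assemble the relation set $\bigtriangleup=\{\,v\assign kg : v\in\hcyc E,\ g\in\mathcal{B}_{\hcyc E},\ k\in\mathbb{Z}\,\}$, with the index understood mod $n$ owing to the cyclic identification $h_0(0)=h_{n[n]}(0)$. Plugging $(\mathcal{B}_{\hcyc E},\bigtriangleup)$ into Definition~\ref{def:cellComplexFreeGroup} yields the continuous presentation map $f:2^K\times\bigtriangleup\to 2^K$ defined by $f(\mathcal{B}_{\hcyc E},\bigtriangleup)=G(\{g\},+)$, which is the required free group presentation of $G(\hcyc E,+)$.

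The step I expect to be the main obstacle is not the index bookkeeping but the justification that $+:K\times K\to K$ from Definition~\ref{def:filledHomotopicCycle} actually endows the vertex set of $\hcyc E$ with a group structure: associativity needs $h_{(k+k')[n]}(0)$ to behave additively in the exponent, the identity must be identified with $0\cdot g=h_0(0)$, and inverses must coincide with the $\bar h$-maps. Once this is in place, the fact that the paper's notion of ``free'' (Definition~\ref{RotmanPresentation}) only demands that each element be written as a linear combination of generators — rather than the stronger universal property — means the cyclic group on the single generator $g$ is enough, and the conclusion follows by direct substitution into Definition~\ref{def:cellComplexFreeGroup}.
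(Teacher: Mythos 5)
Your proposal is correct and takes essentially the same route as the paper: choose a basis of generating vertices, form the relation set $\bigtriangleup$ of linear combinations $v=\sum kh_i(0)$, and obtain the presentation as the map $f(\mathcal{B},\bigtriangleup)=G(\mathcal{B},+)$ of Definition~\ref{def:cellComplexFreeGroup}. The only difference is cosmetic: you fix the singleton basis $\left\{h_0(0)\right\}$ exactly as in the preceding 1-cycle lemma, while the paper's proof keeps $\mathcal{B}\subset\hcyc E$ generic and writes the result as $G(\left\{h_1,\dots,h_{\abs{\mathcal{B}}}\right\},+)$; the caveat you raise about verifying that $+$ genuinely equips the vertex set with a group structure is equally left implicit in the paper's own argument.
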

\begin{proof}
Let $E$ be a set of 1-cycle vertices in the domain and range of the maps $h:I\to E$ in a homotopic cycle $\hcyc E$ in a CW space $K$, $G(\hcyc E,+)$ a group on the vertexes of $\hcyc E$ with move operation $+$, and let
\begin{align*}
\mathcal{B} &\subset \hcyc E,\\
\bigtriangleup &= \left\{v = \sum kh_i(0),v,h_i(0)\in \hcyc E\right\}.
\end{align*} 
be the basis $\mathcal{B}$ set of members of $\hcyc E$ and set of relations $\bigtriangleup$ between members of $\hcyc E$ and the generators in basis $\mathcal{B}$.  Then the free group presentation of $G$ is a mapping $f:2^K\times\bigtriangleup\to 2^K$ defined by
\begin{align*}
f(\mathcal{B},\bigtriangleup) &= \left\{v \assign{\mathop{\sum}\limits_{i,j}k_ih_j(0)\in\bigtriangleup}: v\in \hcyc E,h_j\in \mathcal{B}, k_i\in\mathbb{Z}\right\}\\
          &= \overbrace{\boldsymbol{G(\left\{h_1,\dots,h_{_{\abs{\tiny \mathcal{B}}}}\right\}},+).}^{\mbox{\textcolor{blue}{\bf $\boldsymbol{\mathcal{B}\times\bigtriangleup}\mapsto$ free group presentation of $G(\hcyc E,+)$}}}
\end{align*}
\end{proof}

\begin{example}
From Lemma~\ref{theorem:hCycPresentation}, the barycentric homotopic cycle $\bhcyc E'$ in Example~\ref{ex:barycentricHomCyc} has a free group presentation.
\qquad\textcolor{blue}{\Squaresteel}
\end{example}

%\begin{definition}
%A homotopic  nerve is a collection of intersecting vortexes. \qquad\textcolor{blue}{\Squaresteel}
%\end{definition}

\begin{theorem}
Every homotopic nerve in a CW space has a free group presentation.
\end{theorem}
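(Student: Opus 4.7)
The plan is to reduce to Lemma~\ref{theorem:hCycPresentation} applied to each constituent homotopic cycle of the nerve, and then to amalgamate the per-cycle presentations along the common vertex whose existence is guaranteed by Definition~\ref{def:homotopicNerve}. Let $\hNrv E = \bigcup_j \hcyc E_j$ be a homotopic nerve in a CW space $K$, and fix a vertex $v^\ast \in \bigcap_j \hcyc E_j$, which is nonvoid by definition of a nerve complex.

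First I would apply Lemma~\ref{theorem:hCycPresentation} to each homotopic cycle $\hcyc E_j$ separately to obtain bases $\mathcal{B}_j\subset \hcyc E_j$ and relation sets $\bigtriangleup_j$ yielding free group presentations $f_j(\mathcal{B}_j,\bigtriangleup_j) = G(\mathcal{B}_j,+)$. Using the freedom of choice of the distinguished generator in the proof of that lemma, I may take $g_j = h_0^j(0)\in \mathcal{B}_j$ to be the common vertex $v^\ast$ for every $j$, so that every vertex $v\in\hcyc E_j$ is reachable by $k$ forward walks from $v^\ast$ along $\hcyc E_j$ and satisfies $v = k g_j$ in $G(\hcyc E_j,+)$.

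Next I would form the combined basis $\mathcal{B} = \bigcup_j \mathcal{B}_j$ and the combined relation set
\[
\bigtriangleup = \left\{v \assign \sum_{i,j} k_{i,j}\, g_j : v\in \hNrv E,\ g_j\in\mathcal{B},\ k_{i,j}\in\mathbb{Z}\right\},
\]
and define the presentation map $f: 2^K\times\bigtriangleup \to 2^K$ by $f(\mathcal{B},\bigtriangleup) = G(\mathcal{B},+)$, exactly in the format of Definition~\ref{def:cellComplexFreeGroup}. Every $v\in\hNrv E$ lies in at least one constituent cycle $\hcyc E_j$ and hence is expressible as a linear combination of generators in $\mathcal{B}_j\subset\mathcal{B}$, so the existence clause holds and $f$ is well defined on all of $\hNrv E$.

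The main obstacle I expect is consistency at the intersections of cycles: a vertex lying simultaneously in $\hcyc E_j$ and $\hcyc E_{j'}$ acquires two word-expressions, $k g_j$ and $k' g_{j'}$, which must be identified inside $G(\mathcal{B},+)$. I would handle this by adjoining to $\bigtriangleup$ the identities $k g_j = k' g_{j'}$, one for each vertex sitting in a pairwise intersection of constituent cycles, so that the combined relations record precisely the amalgamation along shared vertices. This is the combinatorial analogue of the van Kampen construction for the fundamental group of a wedge of loops, and it ensures that $f(\mathcal{B},\bigtriangleup)$ is a single well-defined free group presentation of $G(\hNrv E,+)$.
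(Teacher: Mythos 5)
Your argument reaches the right conclusion but by a genuinely different route from the paper. The paper's proof is a one-line reduction: it reruns the construction of Lemma~\ref{theorem:hCycPresentation} verbatim on the whole nerve at once, taking $E$ to be the full vertex set of $\hNrv E$, a single basis $\mathcal{B}\subset\hNrv E$ of generators $h_j(0)$, and a single relation set $\bigtriangleup$, so that the presentation $f(\mathcal{B},\bigtriangleup)=G(\mathcal{B},+)$ is produced in one step with no decomposition. You instead decompose $\hNrv E=\bigcup_j\hcyc E_j$, present each cycle separately with the common vertex $v^\ast$ as distinguished generator, and then amalgamate in the style of van Kampen. What your approach buys is an explicit treatment of the one real subtlety the paper glosses over, namely that a vertex in $\hcyc E_j\cap\hcyc E_{j'}$ acquires two word-expressions; the paper's wholesale construction silently assigns each vertex a single expression and never confronts this. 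The cost is that your fix, adjoining genuine identifications $kg_j=k'g_{j'}$ to $\bigtriangleup$, introduces nontrivial relators among distinct generators, and a group presented with such relators is no longer free in the classical Rotman sense, which sits awkwardly with the theorem's claim of a \emph{free} group presentation. The paper avoids this tension (at the price of rigor) by never splitting the nerve into pieces that must be reglued: its $\bigtriangleup$ contains only the definitional expressions $v\assign\sum_{i,j}k_ig_j$ and no cross-generator identities. If you want your amalgamation route to land on a free group, you should instead shrink the combined basis so that the shared vertex contributes one generator rather than one per cycle, eliminating the need for the identifying relations altogether.
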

\begin{proof}
Symmetric with the proof of Lemma~\ref{theorem:hCycPresentation}, after replacing homotopic cycle $\hcyc E$ with homotopic nerve $\hNrv E$ and replacing generator $g_j$ with $h_j(0)$ in the basis for the free group presentation of the homotopic nerve in a CW space.
\end{proof}

%\begin{theorem}
%Every subgroup $H\subset G$ in a free presentation of a
%homotopic nerve is a normal subgroup.
%\end{theorem}

%\begin{theorem}
%Every vortex complex in a CW space is a vortex nerve.
%\end{theorem} 

\begin{figure}[!ht]
\centering
\begin{pspicture}
%[showgrid=true]
(-3.5,-1.0)(4.0,4.0)
\centering
\psframe[linewidth=0.75pt,linearc=0.25,cornersize=absolute,linecolor=blue](-3.8,-1.0)(4.8,4.0)
\psline[linewidth=0.5pt,linecolor=blue,arrowscale=1.0]{-v}%
(-1.0,2)(-0.5,3)
\psline[linewidth=0.5pt,linecolor=blue,arrowscale=1.0]{-v}%
(-0.5,3)(0.5,3.5)
\psline[linewidth=0.5pt,linecolor=blue,arrowscale=1.0]{-v}%
(0.5,3.5)(1.5,3.0)
\psline*[linecolor=gray!20]% h_1(0)
(1.5,3.0)(2.0,2.0)(0.5,1.5)
% barycentric homotopic cycle
\psline[linewidth=0.5pt,linecolor=magenta,arrowscale=1.0]{-v}%
(1.5,2.25)(1.5,3.0)
\psline[linewidth=0.5pt,linecolor=magenta,arrowscale=1.0]{-v}%
(-0.3,0.85)(-0.5,1.5)
\psline[linewidth=0.5pt,linecolor=magenta,arrowscale=1.0]{-v}%
(-0.5,1.5)(-0.4,2.25)
\psline[linewidth=0.5pt,linecolor=magenta,arrowscale=1.0]{-v}%
(-0.4,2.25)(0.1,2.95)
\psline[linewidth=0.5pt,linecolor=magenta,arrowscale=1.0]{-v}%
(0.1,2.95)(0.85,2.95)
\psline[linewidth=0.5pt,linecolor=magenta,arrowscale=1.0]{-v}%
(0.85,2.95)(1.5,2.25)
\psline[linewidth=0.5pt,linecolor=magenta,arrowscale=1.0]{-v}%
(1.5,0.75)(0.9,0.25)
% extended paths
\psline[linewidth=0.5pt,linecolor=magenta,arrowscale=1.0]{-v}%
(1.5,2.25)(1.5,0.85)
\psline[linewidth=0.5pt,linecolor=magenta,arrowscale=1.0]{-v}%
(0.9,0.25)(-0.3,0.85)
\psdots[dotstyle=o,dotsize=2.5pt,linewidth=1.2pt,linecolor=black,fillcolor=green!80]% barycentric cycle vertexes
(-0.3,0.85)(-0.5,1.5)(-0.4,2.25)(0.1,2.95)(0.85,2.95)%
(1.5,2.25)(1.5,0.75)(0.9,0.25)
% outer homotopic cycle
\psline[linewidth=0.5pt,linecolor=blue,arrowscale=1.0]{-v}%
(1.5,3.0)(2.0,2.0)
% triangulate on center of mass
\psline[linewidth=0.5pt,linecolor=blue,arrowscale=1.0]{-v}%
(1.5,3.0)(0.5,1.5)
\psline[linewidth=0.5pt,linecolor=blue,arrowscale=1.0]{-v}%
(0.5,1.5)(2.0,2.0)
\psline[linewidth=0.5pt,linecolor=blue,arrowscale=1.0]{-v}%
(0.5,1.5)(2.0,1.0)
\psline[linewidth=0.5pt,linecolor=blue,arrowscale=1.0]{-v}%
(1.5,0.0)(0.5,1.5)
\psline[linewidth=0.5pt,linecolor=blue,arrowscale=1.0]{-v}%
(0.5,1.5)(0.5,-0.5)
\psline[linewidth=0.5pt,linecolor=blue,arrowscale=1.0]{-v}%
(1.5,0.0)(0.5,1.5)
\psline[linewidth=0.5pt,linecolor=blue,arrowscale=1.0]{-v}%
(0.5,1.5)(-1.0,1.0)
\psline[linewidth=0.5pt,linecolor=blue,arrowscale=1.0]{-v}%
(-1.0,2.0)(0.5,1.5)
\psline[linewidth=0.5pt,linecolor=blue,arrowscale=1.0]{-v}%
(0.5,1.5)(-0.5,3.0)
\psline[linewidth=0.5pt,linecolor=blue,arrowscale=1.0]{-v}%
(0.5,1.5)(0.5,3.5)
\psline[linewidth=0.5pt,linecolor=blue,arrowscale=1.0]{-v}%
(-0.5,0.0)(0.5,1.5)
% end triangulation
\psdots[dotstyle=o,dotsize=2.8pt,linewidth=1.2pt,linecolor=black,fillcolor=green!80](0.5,1.5)
\psline[linewidth=0.5pt,linecolor=blue,arrowscale=1.0]{-v}%
(2.0,1.0)(1.5,0.0)
\psline[linewidth=0.5pt,linecolor=blue,arrowscale=1.0]{-v}%
(1.5,0.0)(0.5,-0.5)
%\psline[linewidth=0.5pt,linecolor=blue,arrowscale=1.0]{-v}%
%(0.5,-0.5)(-0.5,0.0)
\psline[linewidth=0.5pt,linecolor=blue,arrowscale=1.0]{-v}%
(-0.5,0.0)(-1.0,1.0)
\psline[linewidth=0.5pt,linecolor=blue,arrowscale=1.0]{-v}%
(-1.0,1.0)(-1.0,2)
\psdots[dotstyle=o,dotsize=2.5pt,linewidth=1.2pt,linecolor=black,fillcolor=red!80]% shared boundary vertex
(-1.0,2)(-0.5,3)(0.5,3.5)(1.5,3.0)(2.0,2.0)(2.0,1.0)(1.5,0.0)%
(0.5,-0.5)(-0.5,0.0)(-1.0,1.0)(-1.0,1.0)(-1.0,2)
\psdots[dotstyle=o,dotsize=2.8pt,linewidth=1.2pt,linecolor=black,fillcolor=green!80]% shared boundary vertex
(1.5,3.0)
\rput(-0.7,2.0){\footnotesize $\boldsymbol{v_1}$}
\rput(-0.7,0.9){\footnotesize $\boldsymbol{v_0}$}
\psdots[dotstyle=o,dotsize=1.5pt,linewidth=1.2pt,linecolor=black,fillcolor=red!80]% shared boundary vertex
(2.0,1.7)(2.0,1.5)(2.0,1.3)
\psdots[dotstyle=o,dotsize=1.5pt,linewidth=1.2pt,linecolor=black,fillcolor=red!80]% shared boundary vertex
(0.2,-0.4)(0.0,-0.3)(-0.2,-0.2)
\rput(-3.2,3.8){\footnotesize $\boldsymbol{K}$}
\rput(3.3,0.9){\footnotesize %
$\boldsymbol{h_{i}(0) = h_{i-1}(1)}$} 
\rput(2.1,0){\footnotesize $\boldsymbol{h_{i}(1)}$}
\rput(-2.5,0.9){\footnotesize %
$\boldsymbol{h_{0}(0) =h_{(n-1)[n]}(1)}$} 
\rput(-2.55,0.6){\footnotesize $\boldsymbol{=\ v_0}$}%
\rput(-2.2,2.1){\footnotesize $
\boldsymbol{h_1(0)\ = h_0(1)}$}
%\rput(-1.6,3.1){\footnotesize $
%\boldsymbol{h_2(0)=h_{1}(1)}$}
\rput(0.9,3.7){\footnotesize $
\boldsymbol{h_3(0) = v_3}$}
\rput(-1.1,-0.2){\footnotesize $
\boldsymbol{h_{(n-1)[n]}(0)}$}
\rput(3.0,3.2){\colorbox{gray!20}{$
\boldsymbol{\hbr A}$}}
\psline[linewidth=1.2pt,linestyle=dotted,linecolor=blue,arrowscale=0.5]{-v}%
(3.0,3.0)(1.5,2.5)
\rput(-2.5,3.2){\colorbox{gray!20}{$
\boldsymbol{\hcyc E'}$}}
\psline[linewidth=1.2pt,linestyle=dotted,linecolor=blue,arrowscale=0.5]{-v}%
(-2.5,3.0)(-0.35,2.5)
\rput(-3.0,1.5){\colorbox{gray!20}{$
\boldsymbol{\hcyc E}$}}
\psline[linewidth=1.2pt,linestyle=dotted,linecolor=blue,arrowscale=0.5]{-v}%
(-2.6,1.5)(-1.0,1.5)
\rput(1.95,3.0){\footnotesize $
\boldsymbol{h_4(0)}$}
\rput(2.4,2.0){\footnotesize $
\boldsymbol{h_5(0)}$}
\rput(0.8,1.8){\footnotesize $
\boldsymbol{c}$}
\end{pspicture}
\caption[]{Homotopic vortex $\boldsymbol{\vor E}$.}
\label{fig:homotopicCycle8}
\end{figure} 

\begin{remark}
A collection of \emph{                                  \bf Nested 1-cycles} provides the basic structure of a vortex.  Cycles are \emph{nested}, provided each appear, one inside the other.  For example, a pair of 1-cycles $\cyc E,\cyc E'$ are nested, provided $\cyc E\subset \Int(\cyc E')$. In this work, intersecting, nested cycles are possible, {\em i.e.},
\[
\cyc E\subset \Int(\cyc E')\ \mbox{and, possibly,}\ \cyc E\cap \cyc E'\neq \emptyset.
\]
That is, each pair of nested cycles in a vortex can have at least one vertex in common.  To solve the problem of structuring a vortex $\vor E$ containing non-intersecting nested cycles so that $\vor E$ can be mapped to a homotopic vortex, bridge edges are introduced (see Def.~\ref{def:hVortex}).
\qquad\textcolor{blue}{\Squaresteel}
\end{remark}

\begin{definition}
Given a pair of nested 1-cycles $\cyc E,\cyc E'$, a {\bf bridge edge} $A$ (denoted by $\br A$) is an edge with one vertex on $\cyc E$ and one vertex on $\cyc E'$, or vice versa.
\qquad\textcolor{blue}{\Squaresteel}
\end{definition}

A homotopic bridge is a path between edge vertices of a bridge edge.

\begin{definition}
Given a pair of nested, homotopic 1-cycles $\hcyc E,\hcyc E'$, a {\bf homotopic bridge} $A$ (denoted by $\hbr A$) is a homotopic map $h$ such that $h(0) = v\in\hcyc E$ and $h(1) = v'\in\hcyc E'$, or vice versa.
\qquad\textcolor{blue}{\Squaresteel}
\end{definition}

\begin{example}
A sample homotopic bridge $\hbr A$ between a pair of homotopic cycles is shown in Fig.~\ref{fig:homotopicCycle8}.
\qquad\textcolor{blue}{\Squaresteel}
\end{example}

\begin{definition}
A {\bf vortex cell complex} $E$ (denoted by $\vor E$) is a collection of nested 1-cycles with at least one bridge edge between a pair of vertexes on the cycles in the vortex, or the 1-cycles in $\vor E$ have nonvoid intersection.
\qquad\textcolor{blue}{\Squaresteel}
\end{definition}

By replacing each of the edges in a vortex cell complex $\vor E$ with a homotopic map and replacing every bridge edge in $\vor E$
with a homotopic bridge, we obtain a homotopic vortex.

\begin{definition}\label{def:hVortex}
A {\bf homotopic vortex} $E$ (denoted by $\hvor E$) is a collection of nested homotopic 1-cycles with nonvoid intersection or with at least one homotopic bridge between each pair of non-intersecting cycle vertexes in the vortex.
\qquad\textcolor{blue}{\Squaresteel}
\end{definition}

\begin{example}
A sample homotopic vortex $\hvor E$ with a bridge edge is shown in Fig.~\ref{fig:homotopicCycle8}.
\qquad\textcolor{blue}{\Squaresteel}
\end{example}

\begin{theorem}\label{thm:hvor}
Every homotopic vortex has a free group presentation.
\end{theorem}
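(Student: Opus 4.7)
The plan is to extend the free-group presentation argument used for a single homotopic cycle (Lemma~\ref{theorem:hCycPresentation}) to the nested, bridge-connected structure of $\hvor E$. By Definition~\ref{def:hVortex}, $\hvor E$ is a finite collection $\{\hcyc E_1,\dots,\hcyc E_m\}$ of homotopic 1-cycles, each of which already admits a free group presentation $f(\mathcal{B}_{\hcyc E_j},\bigtriangleup_j)=G(\mathcal{B}_{\hcyc E_j},+)$ with a single distinguished generator $g_j = h^{(j)}_0(0)$. The first step is to fix these per-cycle generators and to assemble a candidate basis $\mathcal{B}_{\hvor E}$ for the whole vortex out of them, augmented by whatever additional elements are needed to realize the move operation between distinct nested cycles.

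Next I would handle the two cases distinguished in Def.~\ref{def:hVortex}. If a pair of nested cycles $\hcyc E_j,\hcyc E_{j+1}$ share a vertex $v^\ast$, then $v^\ast = k_jg_j = k_{j+1}g_{j+1}$ for integers $k_j,k_{j+1}$, so $g_{j+1}$ is already expressible as a linear combination of $g_j$ under $+$, and the single generator $g_1$ suffices for $\mathcal{B}_{\hvor E}$ by induction on $j$. If, instead, the pair is only linked by a homotopic bridge $\hbr A$ with $h(0)=v\in\hcyc E_j$ and $h(1)=v'\in\hcyc E_{j+1}$, adjoin $\hbr A$ itself as a generator (write it $b_j$) and extend the binary move operation $+$ by the relation $v+b_j=v'$. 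Then $g_{j+1}$, and hence every vertex of $\hcyc E_{j+1}$, is reachable from $g_j$ by first walking $k_j$ steps on $\hcyc E_j$, crossing $b_j$, and finally walking along $\hcyc E_{j+1}$.

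Iterating these two cases from the innermost cycle outward yields a finite basis
\[
\mathcal{B}_{\hvor E}=\{g_1\}\cup\{b_j : \hcyc E_j,\hcyc E_{j+1}\ \text{bridged}\},
\]
and a relation set
\[
\bigtriangleup_{\hvor E}=\left\{v \assign \sum_{i,j}k_ig_j+\sum_j \epsilon_j b_j : v\in\hvor E,\ g_j\in\mathcal{B}_{\hvor E},\ k_i\in\mathbb{Z},\ \epsilon_j\in\{0,1\}\right\}.
\]
The free group presentation of the vortex is then the map $f:2^K\times\bigtriangleup_{\hvor E}\to 2^K$ defined, as in Def.~\ref{def:cellComplexFreeGroup}, by
\[
f(\mathcal{B}_{\hvor E},\bigtriangleup_{\hvor E}) = \left\{v \assign \sum_{i,j}k_ih_j(0)+\sum_j\epsilon_j b_j \in \bigtriangleup_{\hvor E}\right\} = G(\mathcal{B}_{\hvor E},+).
\]

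The main obstacle is the bridge case: unlike a single cycle, where every vertex is reachable by repeated application of a single generator, nested cycles connected only by bridges require the move operation $+$ to be extended so that the bridge itself behaves as a generator. The argument must therefore verify that the extended $+$ is still a well-defined binary operation on $\hvor E$ (closure and associativity on the vertex set, with inverses supplied by the reverse-traversal construction of Def.~\ref{def:filledHomotopicCycle}). Once this extension is justified, the rest of the proof is symmetric with Lemma~\ref{theorem:hCycPresentation}, applied cycle-by-cycle along the nesting chain.
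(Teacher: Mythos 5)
Your proposal reaches the same conclusion by the same overall template --- extend the presentation map $f(\mathcal{B},\bigtriangleup)$ of Lemma~\ref{theorem:hCycPresentation} cycle-by-cycle across the nested structure --- but it makes a genuinely different choice of basis. The paper's proof takes $\mathcal{B}$ to be a set of \emph{vertexes}: the end points of the homotopic bridges together with the vertexes in the intersections of the cycles of $\hvor E$, so that each cycle in the nest carries its own vertex generator and the move operation $+$ of Def.~\ref{def:filledHomotopicCycle} is used unchanged within each cycle. You instead keep a single vertex generator $g_1$ on the innermost cycle and adjoin the bridge maps $b_j$ themselves as generators, extending $+$ by the relation $v+b_j=v'$. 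Your route has the merit of making explicit something the paper leaves implicit --- namely that reachability between non-intersecting nested cycles is not delivered by the original $+$ and must be supplied by the bridges --- and your closing paragraph correctly identifies the well-definedness of the extended operation as the point that actually needs checking. The cost is that it sits less comfortably with freeness: restricting the bridge coefficients to $\epsilon_j\in\{0,1\}$ and imposing $v+b_j=v'$ as an extra relation goes beyond the relator set $\bigtriangleup$ of Def.~\ref{def:cellComplexFreeGroup}, whereas the paper's choice (bridge \emph{endpoints} as generators, one on each cycle) keeps every basis element a vertex with unrestricted integer coefficients and so matches the form of the presentation map verbatim. Either construction is acceptable at the paper's level of rigor, but if you want to stay strictly inside the framework of Defs.~\ref{def:filledHomotopicCycle} and~\ref{def:cellComplexFreeGroup}, replace your generators $b_j$ by their endpoint vertexes.
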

\begin{proof}
Symmetric with the proof of Lemma~\ref{theorem:hCycPresentation}, after replacing homotopic cycle $\hcyc E$ with homotopic vortex $\hvor E$ in group $G(\hvor E,+)$ and replacing generator $g_j$ with $h_j(0)$ in basis $\mathcal{B}$, the set of generator vertexes either on the end points of the homotopic bridges or in the intersection of the $\hvor E$ homotopic cycles.  Then introduce
\[
\bigtriangleup = \left\{v = \sum kh_i(0),v\in E,h_i(0)\in \mathcal{B}\subset E\right\}
\]
with $E$ equal to the set of vertexes in the domain and range of the maps $h:I\to E$ in $\hvor E$ in a CW space $K$ and introduce a continuous mapping $f:2^K\times\bigtriangleup\to 2^K$ defined by
\begin{align*}
f(\mathcal{B},\bigtriangleup) &= \left\{v \assign{\mathop{\sum}\limits_{i,j}k_ih_j(0)\in\bigtriangleup}: v\in E,h_j\in \mathcal{B}, k_i\in\mathbb{Z}\right\}\\
          &= \overbrace{\boldsymbol{G(\mathcal{B},+).}}^{\mbox{\textcolor{blue}{\bf $\boldsymbol{\mathcal{B}\times\bigtriangleup}\mapsto$ free group presentation of $G(\hvor E,+)$}}}
\end{align*}
\end{proof}

\begin{figure}[!ht]
\centering
\begin{pspicture}
%[showgrid=true]
(-3.5,-1.0)(4.0,5.0)
\centering
\psframe[linewidth=0.75pt,linearc=0.25,cornersize=absolute,linecolor=blue](-4.8,-1.0)(4.8,5.0)
\psline*[linecolor=gray!20]% h_1(0)
(0.5,1.5)(0.5,3.5)(1.5,3.0)
\psline[linewidth=0.5pt,linecolor=blue,arrowscale=1.0]{-v}%
(-1.0,2)(-0.5,3)
\psline[linewidth=0.5pt,linecolor=blue,arrowscale=1.0]{-v}%
(-0.5,3)(0.5,3.5)
\psline[linewidth=0.5pt,linecolor=blue,arrowscale=1.0]{-v}%
(0.5,3.5)(1.5,3.0)
\psline*[linecolor=gray!20]% h_1(0)
(1.5,3.0)(2.0,2.0)(0.5,1.5)
% barycentric homotopic cycle
\psline[linewidth=0.5pt,linecolor=magenta,arrowscale=1.0]{-v}%
(-0.3,0.85)(-0.5,1.5)
\psline[linewidth=0.5pt,linecolor=magenta,arrowscale=1.0]{-v}%
(-0.5,1.5)(-0.4,2.25)
\psline[linewidth=0.5pt,linecolor=magenta,arrowscale=1.0]{-v}%
(-0.4,2.25)(0.1,2.95)
\psline[linewidth=0.5pt,linecolor=magenta,arrowscale=1.0]{-v}%
(0.1,2.95)(0.85,2.95)
% join cycles
\psline[linewidth=0.5pt,linecolor=magenta,arrowscale=1.0]{-v}%
(0.85,2.95)(1.5,3.0)
\psline[linewidth=0.5pt,linecolor=magenta,arrowscale=1.0]{-v}%
(1.5,3.0)(1.5,2.25)
\psline[linewidth=0.5pt,linecolor=magenta,arrowscale=1.0]{-v}%
(1.5,0.75)(0.9,0.25)
% extended paths
\psline[linewidth=0.5pt,linecolor=magenta,arrowscale=1.0]{-v}%
(1.5,2.25)(1.5,0.85)
\psline[linewidth=0.5pt,linecolor=magenta,arrowscale=1.0]{-v}%
(0.9,0.25)(-0.3,0.85)
\psdots[dotstyle=o,dotsize=2.5pt,linewidth=1.2pt,linecolor=black,fillcolor=green!80]% barycentric cycle vertexes
(-0.3,0.85)(-0.5,1.5)(-0.4,2.25)(0.1,2.95)(0.85,2.95)%
(1.5,2.25)(1.5,0.75)(0.9,0.25)
% outer homotopic cycle
\psline[linewidth=0.5pt,linecolor=blue,arrowscale=1.0]{-v}%
(1.5,3.0)(2.0,2.0)
% triangulate on center of mass
\psline[linewidth=0.5pt,linecolor=blue,arrowscale=1.0]{-v}%
(1.5,3.0)(0.5,1.5)
\psline[linewidth=0.5pt,linecolor=blue,arrowscale=1.0]{-v}%
(0.5,1.5)(2.0,2.0)
\psline[linewidth=0.5pt,linecolor=blue,arrowscale=1.0]{-v}%
(0.5,1.5)(2.0,1.0)
\psline[linewidth=0.5pt,linecolor=blue,arrowscale=1.0]{-v}%
(1.5,0.0)(0.5,1.5)
\psline[linewidth=0.5pt,linecolor=blue,arrowscale=1.0]{-v}%
(0.5,1.5)(0.5,-0.5)
\psline[linewidth=0.5pt,linecolor=blue,arrowscale=1.0]{-v}%
(1.5,0.0)(0.5,1.5)
\psline[linewidth=0.5pt,linecolor=blue,arrowscale=1.0]{-v}%
(0.5,1.5)(-1.0,1.0)
\psline[linewidth=0.5pt,linecolor=blue,arrowscale=1.0]{-v}%
(-1.0,2.0)(0.5,1.5)
\psline[linewidth=0.5pt,linecolor=blue,arrowscale=1.0]{-v}%
(0.5,1.5)(-0.5,3.0)
\psline[linewidth=0.5pt,linecolor=blue,arrowscale=1.0]{-v}%
(0.5,1.5)(0.5,3.5)
\psline[linewidth=0.5pt,linecolor=blue,arrowscale=1.0]{-v}%
(-0.5,0.0)(0.5,1.5)
% end triangulation
\psdots[dotstyle=o,dotsize=2.8pt,linewidth=1.2pt,linecolor=black,fillcolor=green!80](0.5,1.5)
\psline[linewidth=0.5pt,linecolor=blue,arrowscale=1.0]{-v}%
(2.0,1.0)(1.5,0.0)
\psline[linewidth=0.5pt,linecolor=blue,arrowscale=1.0]{-v}%
(1.5,0.0)(0.5,-0.5)
%\psline[linewidth=0.5pt,linecolor=blue,arrowscale=1.0]{-v}%
%(0.5,-0.5)(-0.5,0.0)
\psline[linewidth=0.5pt,linecolor=blue,arrowscale=1.0]{-v}%
(-0.5,0.0)(-1.0,1.0)
\psline[linewidth=0.5pt,linecolor=blue,arrowscale=1.0]{-v}%
(-1.0,1.0)(-1.0,2)
\psdots[dotstyle=o,dotsize=2.5pt,linewidth=1.2pt,linecolor=black,fillcolor=red!80]% shared boundary vertex
(-1.0,2)(-0.5,3)(0.5,3.5)(1.5,3.0)(2.0,2.0)(2.0,1.0)(1.5,0.0)%
(0.5,-0.5)(-0.5,0.0)(-1.0,1.0)(-1.0,1.0)(-1.0,2)
\psdots[dotstyle=o,dotsize=2.8pt,linewidth=1.2pt,linecolor=black,fillcolor=green!80]% shared boundary vertex
(1.5,3.0)
\rput(-0.7,2.0){\footnotesize $\boldsymbol{v_1}$}
\rput(-0.7,0.9){\footnotesize $\boldsymbol{v_0}$}
\psdots[dotstyle=o,dotsize=1.5pt,linewidth=1.2pt,linecolor=black,fillcolor=red!80]% shared boundary vertex
(2.0,1.7)(2.0,1.5)(2.0,1.3)
\psdots[dotstyle=o,dotsize=1.5pt,linewidth=1.2pt,linecolor=black,fillcolor=red!80]% shared boundary vertex
(0.2,-0.4)(0.0,-0.3)(-0.2,-0.2)
\rput(-4.2,4.8){\footnotesize $\boldsymbol{K}$}
\rput(3.1,0.9){\footnotesize %
$\boldsymbol{h_{i}(0) = h_{i-1}(1)}$} 
\rput(2.1,0){\footnotesize $\boldsymbol{h_{i}(1)}$}
\rput(-2.5,0.9){\footnotesize %
$\boldsymbol{h_{0}(0) =h_{(n-1)[n]}(1)}$} 
\rput(-2.55,0.6){\footnotesize $\boldsymbol{=\ v_0}$}%
\rput(-2.2,2.1){\footnotesize $
\boldsymbol{h_1(0)\ = h_0(1)}$}
%\rput(-1.6,3.1){\footnotesize $
%\boldsymbol{h_2(0)=h_{1}(1)}$}
%\rput(0.9,3.7){\footnotesize $
%\boldsymbol{h_3(0) = v_3}$}
\rput(-1.1,-0.2){\footnotesize $
\boldsymbol{h_{(n-1)[n]}(0)}$}
\rput(2.0,4.5){\colorbox{gray!20}{$
\boldsymbol{\hcyc E\cap\bhcyc E'=h_4(0)}$}}
\psline[linewidth=1.2pt,linestyle=dotted,linecolor=blue,arrowscale=0.5]{-v}%
(3.0,4.5)(2.0,3.3)
\rput(-2.5,3.2){\colorbox{gray!20}{$
\boldsymbol{\bhcyc E'}$}}
\psline[linewidth=1.2pt,linestyle=dotted,linecolor=blue,arrowscale=0.5]{-v}%
(-2.5,3.0)(-0.35,2.5)
\rput(-3.0,1.5){\colorbox{gray!20}{$
\boldsymbol{\hcyc E}$}}
\psline[linewidth=1.2pt,linestyle=dotted,linecolor=blue,arrowscale=0.5]{-v}%
(-2.6,1.5)(-1.0,1.5)
\rput(1.95,3.0){\footnotesize $
\boldsymbol{h_4(0)}$}
\rput(2.4,2.0){\footnotesize $
\boldsymbol{h_5(0)}$}
\rput(0.8,1.8){\footnotesize $
\boldsymbol{c}$}
\end{pspicture}
\caption[]{Homotopic vortex nerve $\boldsymbol{\vorNrv E}$.}
\label{fig:homotopicCycle13}
\end{figure}

\begin{definition}
A {\bf vortex nerve} $E$ (denoted by $\vorNrv E$) is a collection of nested 1-cycles that have nonempty intersection.
\qquad\textcolor{blue}{\Squaresteel}
\end{definition}

By replacing each of the edges in a vortex nerve $\vorNrv E$ homotopic paths, we obtain a homotopic vortex nerve.

\begin{definition}
A {\bf homotopic vortex nerve} $E$ (denoted by $\hvorNrv E$) is a collection of homotopic vortexes that have nonempty intersection.
\qquad\textcolor{blue}{\Squaresteel}
\end{definition}

\begin{example}
A sample homotopic vortex nerve $\hvorNrv E$ containing a single vortex is shown in Fig.~\ref{fig:homotopicCycle13}.  This is a homotopic vortex nerve, which results from the intersecting of a pair of nested homotopic cycles $\hcyc E,bhcyc E'$, {\em i.e.},
\[
\hcyc E\cap\bhcyc E'=h_4(0).
\]
This is also an example of an Alexandroff nerve, since the homotopic cycles in $\hvorNrv E$ have nonvoid intersection. The minimal basis $\mathcal{B} = \left\{h_4(0)\right\}$ in the free group presentation of this nerve.
\qquad\textcolor{blue}{\Squaresteel}
\end{example}

\begin{figure}[!ht]
\centering
\begin{pspicture}
%[showgrid=true]
(-1.5,-0.5)(4.0,4.0)
%(-1.8,2.8)
\psframe[linewidth=0.75pt,linearc=0.25,cornersize=absolute,linecolor=blue](-2.5,-0.5)(4.5,3.0)
 %Vigolo left butterfly wing
\psline*[linestyle=solid,linecolor=green!30]%
(0,0)(1,1)(0,2)(-1,1.5)(-1,0.5)(0,0)
\psline[linewidth=0.5pt,linecolor=blue,arrowscale=1.0]{-v}%
(0,0)(1,1)
\psline[linewidth=0.5pt,linecolor=blue,arrowscale=1.0]{-v}%
(1,1)(0,2)
\psline[linewidth=0.5pt,linecolor=blue,arrowscale=1.0]{-v}%
(0,2)(-1,1.5)
\psline[linewidth=0.5pt,linecolor=blue,arrowscale=1.0]{-v}%
(-1,1.5)(-1,0.5)
\psline[linewidth=0.5pt,linecolor=blue,arrowscale=1.0]{-v}%
(-1,0.5)(0,0)
\psline[linewidth=0.5pt,linecolor=blue,arrowscale=1.0]{-v}%
(0,0)(1,1)
% left butterfly boundary
% Left Vigolo He inner boundary
\psline*[linestyle=solid,linecolor=orange!50]%
(1,1)(-.55,1.25)(-.55,0.75)(0,0.5)(1,1)
\psline[linewidth=0.5pt,linecolor=blue,arrowscale=1.0]{-v}%
(1,1)(-.55,1.25)
\psline[linewidth=0.5pt,linecolor=blue,arrowscale=1.0]{-v}%
(-.55,1.25)(-.55,0.75)
\psline[linewidth=0.5pt,linecolor=blue,arrowscale=1.0]{-v}%
(-.55,0.75)(0,0.5)
\psline[linewidth=0.5pt,linecolor=blue,arrowscale=1.0]{-v}%
(0,0.5)(1,1)
%\psline[linestyle=solid,linecolor=black]%
%(-.55,1.25)(-.55,0.75)(0,0.5)
\psdots[dotstyle=o,dotsize=2.2pt,linewidth=1.2pt,linecolor=black,fillcolor=yellow!80]%
(0,0.5)(1,1)(-.55,1.25)(-.55,0.75)(0,0.5)
(0,2)(-1,1.5)(-1,0.5)(0,0) 
%(0.5,1)
% Right butterfly
\psline*[linestyle=solid,linecolor=green!30]%
(1,1)(2.0,2.0)(3.0,1.5)(3.0,0.5)(2.0,0.0)(1,1)
%(1.5,2)
% Right butterfly wing outer boundary
\psline[linewidth=0.5pt,linecolor=blue,arrowscale=1.0]{-v}%
(1,1)(2.0,2.0)
\psline[linewidth=0.5pt,linecolor=blue,arrowscale=1.0]{-v}%
(2.0,2.0)(3.0,1.5)
\psline[linewidth=0.5pt,linecolor=blue,arrowscale=1.0]{-v}%
(3.0,1.5)(3.0,0.5)
\psline[linewidth=0.5pt,linecolor=blue,arrowscale=1.0]{-v}%
(3.0,0.5)(2.0,0.0)
\psline[linewidth=0.5pt,linecolor=blue,arrowscale=1.0]{-v}%
(2.0,0.0)(1,1)
\psline*[linestyle=solid,linecolor=orange!50]%
(1,1)(2.55,1.25)(2.55,0.75)(2.0,0.5)(1,1)
%(1,1)(2.0,0.5)(2.5,0.5)(2.5,0.75)(2.0,1.35)(1,1)
\psline[linewidth=0.5pt,linecolor=blue,arrowscale=1.0]{-v}%
(1,1)(2.55,1.25)
\psline[linewidth=0.5pt,linecolor=blue,arrowscale=1.0]{-v}%
(2.55,1.25)(2.55,0.75)
\psline[linewidth=0.5pt,linecolor=blue,arrowscale=1.0]{-v}%
(2.55,0.75)(2.0,0.5)
\psline[linewidth=0.5pt,linecolor=blue,arrowscale=1.0]{-v}%
(2.0,0.5)(1,1)
\psline[linestyle=solid,linecolor=black]%
(1,1)(2.55,1.25)(2.55,0.75)(2.0,0.5)(1,1)
\psline[linestyle=solid,linecolor=black]%
(1,1)(2.55,1.25)(2.55,0.75)(2.0,0.5)(1,1)
%(1,1)(2.0,0.5)(2.5,0.5)(2.5,0.75)(2.0,1.35)(1,1)
% left Hb ribbon vertexes
\psdots[dotstyle=o,dotsize=2.2pt,linewidth=1.2pt,linecolor=black,fillcolor=yellow!80]%
(1,1)(2.0,0.0)(3.0,0.5)(3.0,1.5)(2.0,2.0)
(2.55,1.25)(2.55,0.75)(2.0,0.5)
\psdots[dotstyle=o,dotsize=3.5pt,linewidth=1.2pt,linecolor=black,fillcolor=red!80]% shared boundary vertex
(1.0,1)
%\rput(1.0,2.5){\footnotesize $\boldsymbol{\hvorNrv E}$}
\rput(-0.8,1.85){\footnotesize $\boldsymbol{\hcyc A}$}
\rput(-0.25,1.5){\footnotesize $\boldsymbol{\hcyc B}$}
\rput(2.85,1.85){\footnotesize $\boldsymbol{\hcyc A'}$}
\rput(2.25,1.5){\footnotesize $\boldsymbol{\hcyc B'}$}
%\rput(1.4,1.0){\footnotesize $\boldsymbol{v_0}$}
\rput(1.,1.25){\footnotesize $\boldsymbol{v_0}$}
\rput(-1.8,2.8){\footnotesize $\boldsymbol{K}$}
\rput(1.0,2.5){\colorbox{gray!20}
{$\boldsymbol{\hvorNrv E}$}}
\rput(-1.8,1.0){\colorbox{gray!20}{$
\boldsymbol{\hvor E}$}}
\rput(3.8,1.0){\colorbox{gray!20}{$
\boldsymbol{\hvor E'}$}}
\end{pspicture}
\caption[]{Vigolo Hawaiian earrings pair $\boldsymbol{\to}$ homotopic vortex nerve $\boldsymbol{\hvorNrv E}$}
\label{fig:HawaiianVortexNerve}
\end{figure}

\begin{example}
A second homotopic vortex nerve $\hvorNrv E$ is shown in Fig.~\ref{fig:HawaiianVortexNerve}.  This is a homotopic vortex nerve, which results from an intersecting of a pair of vortexes $\hvor E,\hvor E'$, {\em i.e.},
\[
\hvor E \cap \hvor E'= \boldsymbol{v_0}.
\]
This nerve has been derived from a homotopic version of a Vigolo Hawaiian earring~\cite{Vigolo2018HawaiianEarrings}, which is a wide ribbon for Vigolo.  The minimal basis $\mathcal{B} = \left\{v_0\right\}$ in the free group presentation of this nerve.
\qquad\textcolor{blue}{\Squaresteel}
\end{example}

\begin{theorem}
Every homotopic vortex nerve has a free group presentation.
\end{theorem}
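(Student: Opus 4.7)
The plan is to follow the template established by Lemma~\ref{theorem:hCycPresentation} and Theorem~\ref{thm:hvor}, since the current theorem is one more step up the same tower: homotopic cycle $\to$ homotopic vortex $\to$ homotopic vortex nerve. First I would set $E$ to be the full vertex set appearing in the domains and ranges of the homotopic maps $h:I\to E$ that realise the constituent vortexes of $\hvorNrv E$ inside a CW space $K$, equip $E$ with the move operation $+$ inherited from Def.~\ref{def:filledHomotopicCycle}, and write $G(\hvorNrv E,+)$ for the resulting group.

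Next I would assemble the basis. By hypothesis $\hvorNrv E$ is a finite collection of homotopic vortexes $\hvor E_1,\dots,\hvor E_m$ with nonvoid intersection, and by Theorem~\ref{thm:hvor} each $\hvor E_k$ already admits a basis $\mathcal{B}_k$ consisting of bridge endpoints or of vertices from interior cycle intersections. I would set
\[
\mathcal{B} \;=\; \bigcup_{k=1}^{m} \mathcal{B}_k,
\]
adjoining, if necessary, one vertex from each pairwise overlap $\hvor E_k\cap \hvor E_\ell$ so that walks can pass between vortexes. I would then introduce the relation set
\[
\bigtriangleup \;=\; \left\{v \assign \sum k_i\, h_j(0) : v\in E,\; h_j(0)\in \mathcal{B},\; k_i\in\mathbb{Z}\right\}
\]
and the continuous mapping $f:2^K\times\bigtriangleup\to 2^K$ specified by
\begin{align*}
f(\mathcal{B},\bigtriangleup) &= \left\{v \assign \mathop{\sum}\limits_{i,j} k_i h_j(0)\in\bigtriangleup : v\in E,\; h_j\in \mathcal{B},\; k_i\in\mathbb{Z}\right\}\\
&= G(\mathcal{B},+),
\end{align*}
which, by Def.~\ref{def:cellComplexFreeGroup}, is a Rotman free group presentation of $G(\hvorNrv E,+)$.

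The step I expect to require most care is verifying that every vertex of $\hvorNrv E$ is actually reachable from $\mathcal{B}$ by such an integer linear combination. Within a single vortex $\hvor E_k$ this is exactly the content of Theorem~\ref{thm:hvor}. For a target vertex $v\in \hvor E_\ell$ starting from a generator in $\mathcal{B}_k$ with $k\neq \ell$, I would concatenate two sub-walks: one from the starting generator along $\hvor E_k$ into a common vertex $v^{\ast}\in \hvor E_k\cap \hvor E_\ell$, which is guaranteed to exist by the nonvoid intersection clause of the homotopic vortex nerve definition, followed by a walk inside $\hvor E_\ell$ from $v^{\ast}$ to $v$. Each sub-walk is already expressed as an integer combination of basis elements by Theorem~\ref{thm:hvor}, so their concatenation is as well, and $f$ surjects onto $G(\hvorNrv E,+)$. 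With reachability secured, the rest of the argument is symmetric with the proof of Lemma~\ref{theorem:hCycPresentation}, after replacing the symbol $\hcyc E$ with $\hvorNrv E$ and letting $h_j(0)$ range over the generator set $\mathcal{B}$ described above.
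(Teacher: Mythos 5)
Your proposal is correct and follows essentially the same route as the paper, whose proof simply declares itself symmetric with Theorem~\ref{thm:hvor} after taking the basis $\mathcal{B}$ to be the generator vertexes at homotopic-bridge endpoints or in the intersections of the cycles of $\hvorNrv E$. Your explicit reachability check, concatenating walks through a common vertex of the nonvoid intersection, supplies a detail the paper leaves implicit but does not change the approach.
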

\begin{proof}
Symmetric with the proof of Theorem~\ref{thm:hvor}, after replacing homotopic vortex $\hvor E$ with homotopic nerve $\hvorNrv E$ and replacing the homotopic vortex basis with basis $\mathcal{B}$, the set of generator vertexes either on the end points of the homotopic bridges between the vortex cycles in $\hvorNrv E$ or in the intersection of the $\hvor E$ homotopic cycles in $\hvorNrv E$. 
\end{proof}

\begin{definition}\label{def:EHnerve} {\bf Edelsbrunner-Harer Nerve} {\rm \cite[\S III.2,p. 59]{Edelsbrunner1999}}.\\
Let $F$ be a finite collection of sets.  A {\bf nerve} consists of all nonempty sets $X$ in $F$ that have nonvoid intersection, {\em i.e.},
\[
\Nrv F = \left\{X\subseteq F: \bigcap X\neq \emptyset\right\}. \mbox{\qquad\textcolor{blue}{\Squaresteel}}
\]
\end{definition}

\begin{remark}
Def.~\ref{def:EHnerve} is a formal definition of the original view of an Alexandrov nerve complex~\cite{Alexandroff1932elementaryConcepts,Alexandroff1926MAnnNerfTheorem}.
\qquad\textcolor{blue}{\Squaresteel} 
\end{remark}

\begin{theorem}\label{theorem:EHnerve}{\rm~\cite[\S III.2,p. 59]{Edelsbrunner1999}}.\\
Let $F$ be a finite collection of closed, convex sets in Euclidean space.  Then the nerve of $F$ and union of the sets in $F$ have the same homotopy type.
\end{theorem}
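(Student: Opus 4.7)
The plan is to establish this as a version of the classical Borsuk nerve theorem, exploiting the fact that a finite collection of closed convex sets in Euclidean space automatically forms a \emph{good cover} of its union. First I would record the key geometric observation: for any subfamily $\{X_{i_0},\dots,X_{i_k}\}\subseteq F$, the intersection $X_{i_0}\cap\cdots\cap X_{i_k}$ is again closed and convex, hence either empty or contractible. This is precisely the hypothesis that powers every proof of the nerve theorem, and under $\Nrv F$ of Definition~\ref{def:EHnerve} it picks out exactly the simplices of the abstract nerve complex.

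The strategy is then to exhibit a homotopy equivalence between $U=\bigcup_{X\in F}X$ and the geometric realization $|\Nrv F|$ by constructing maps in both directions. In one direction I would thicken each closed convex $X\in F$ slightly (e.g.\ by an $\epsilon$-neighborhood) to an open cover $\{U_X\}$ of $U$ whose intersection pattern matches that of $F$, then choose a partition of unity $\{\varphi_X\}$ subordinate to $\{U_X\}$, and define $f\colon U\to |\Nrv F|$ by sending $x$ to the point with barycentric coordinates $\bigl(\varphi_X(x)\bigr)_{X\in F}$ inside the simplex spanned by $\{X : \varphi_X(x)>0\}$. For the reverse direction, I would pick a point $p_X\in X$ for each vertex of $\Nrv F$ and, on each simplex $\sigma=\{X_{i_0},\dots,X_{i_k}\}$, extend affinely into the nonempty convex intersection $X_{i_0}\cap\cdots\cap X_{i_k}\subseteq U$; convexity guarantees the extension is well defined and lands in $U$.

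To finish, I would show that $f$ is a homotopy equivalence by induction on $|F|$, with a straight-line homotopy handling the base case. The inductive step would split $F=F'\cup\{X\}$ and apply a Mayer--Vietoris / gluing argument: the contractibility of every nonempty intersection $X\cap\bigcap\mathcal{G}$ for $\mathcal{G}\subseteq F'$ implies that the pieces $U=U'\cup X$ and $|\Nrv F|=|\Nrv F'|\cup \mathrm{Star}(X)$ glue compatibly, and the inductive hypothesis combined with the gluing lemma for homotopy equivalences then transfers the equivalence $U'\simeq|\Nrv F'|$ to $U\simeq|\Nrv F|$.

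The main obstacle will be the gluing step: one must verify that the homotopy equivalence provided by induction on $F'$ can be arranged to agree, up to homotopy, with the corresponding map on the overlap $U'\cap X$, which itself is a union of closed convex sets indexed by the link of $X$ in $\Nrv F$. Managing this compatibility, rather than any single geometric computation, is where the real care is required; once it is in place, the cofibration structure of CW spaces (Appendix~\ref{ap:CW}) immediately upgrades the pointwise homotopy equivalences to an equivalence of the glued spaces, yielding $U\simeq|\Nrv F|$ as desired.
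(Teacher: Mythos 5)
The paper does not actually prove this statement: Theorem~\ref{theorem:EHnerve} is imported verbatim from Edelsbrunner--Harer with only a citation and no proof environment, so your proposal is being measured against the standard literature argument rather than anything in the text. Your overall plan (every nonempty intersection of convex sets is convex, hence contractible, so $F$ is a good cover; build $f\colon U\to|\Nrv F|$ from a partition of unity; build a map back; finish by induction with a gluing lemma) is indeed the classical route, and the forward map $f$ is set up correctly.

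The genuine gap is in your reverse map. Choosing one point $p_X\in X$ per vertex and extending affinely over a simplex $\sigma=\{X_{i_0},\dots,X_{i_k}\}$ does not land in $U$, and convexity does not save you: take $X_1$ the segment from $(0,0)$ to $(1,0)$ and $X_2$ the segment from $(0,0)$ to $(0,1)$; they meet at the origin, so $\{X_1,X_2\}$ is a $1$-simplex of the nerve, but with $p_{X_1}=(1,0)$ and $p_{X_2}=(0,1)$ the affine image contains $(1/2,1/2)\notin X_1\cup X_2$. In general the image of $\sigma$ lies in the convex hull of the chosen vertex points, which need not be contained in $X_{i_0}\cup\cdots\cup X_{i_k}$, let alone in $\bigcap_j X_{i_j}$. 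The standard fix is to pass to the barycentric subdivision: assign to each simplex $\sigma$ a point $q_\sigma\in\bigcap\sigma$ (nonempty by definition of the nerve) and map each subdivision simplex, indexed by a chain $\sigma_0\subset\cdots\subset\sigma_m$, affinely onto the convex hull of $q_{\sigma_0},\dots,q_{\sigma_m}$, all of which lie in the single convex set $\bigcap\sigma_0\subseteq U$. A secondary worry: your $\epsilon$-thickening step silently assumes that subfamilies with empty intersection remain so after thickening; this holds for compact convex sets (finitely many subfamilies, each disjoint pair at positive distance) but fails for unbounded closed convex sets --- the epigraph of $e^x$ and the closed lower half-plane are disjoint closed convex sets at distance $0$ --- so you should either add compactness, intersect with a large ball first, or avoid thickening altogether. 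The concluding Mayer--Vietoris/gluing induction is sound in outline, and you correctly identify the compatibility on overlaps as the delicate point.
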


\noindent Let $i_X,i_Y$ be identity maps, {\em i.e.}, 
\begin{align*}
i_X(x) &= x,\ \mbox{for all}\ x\in X.\\
i_Y(y) &= y,\ \mbox{for all}\ y\in Y.
\end{align*}
Recall that two spaces $X$ and $Y$ have the same {\bf homotopy type}~\cite[\S 19,p. 108]{Munkres1984} ({\em i.e.}, spaces $X$ and $Y$ are {\bf homotopy equivalent} denoted by $X\equiv Y$ using Hilton's notation~\cite{Hilton1952homotopy}), provided the maps 
\[
f:X\to Y\ \mbox{and}\ g:Y\to X 
\]
are both homotopic such that 
\[
f\circ g = i_X\ \mbox{and}\ g\circ f = i_Y.
\]

\begin{theorem}
Let $F$ be a vortex nerve that is a finite collection of closed, convex sets in Euclidean space.  Then the nerve of $F$ and union of the sets in $F$ have the same homotopy type.
\end{theorem}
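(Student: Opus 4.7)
The plan is to reduce the statement directly to Theorem~\ref{theorem:EHnerve}, since the hypotheses of the latter are tailored to what we are assuming here. First, I would unpack the definitions to confirm that, in the present setting, $F$ fits the framework of an Edelsbrunner-Harer nerve: $F$ is by hypothesis a finite collection of sets (namely, the nested 1-cycles making up the vortex nerve, together with their interiors), and each element of $F$ is, also by hypothesis, closed and convex in the ambient Euclidean space.

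Second, I would verify that the phrase ``nerve of $F$'' appearing in the conclusion coincides with $\Nrv F$ from Definition~\ref{def:EHnerve}: namely, the abstract simplicial complex whose simplices are the nonempty subcollections $X\subseteq F$ with $\bigcap X\neq \emptyset$. Since a vortex nerve is by its own definition a collection of (nested) 1-cycles with nonempty intersection, the top-dimensional simplex is automatically present, and the intermediate simplices are determined by the remaining intersection pattern among the members of $F$. No additional combinatorial work beyond this identification is required; in particular, the vortex-specific structure (nesting, bridge edges) plays no role once we are viewing $F$ purely as a finite family of closed, convex subsets of Euclidean space.

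Third, I would invoke Theorem~\ref{theorem:EHnerve} applied to this $F$, which yields the homotopy equivalence
\[
\Nrv F \ \equiv\ \bigcup_{A\in F} A,
\]
realized by maps $f\colon \Nrv F \to \bigcup F$ and $g\colon \bigcup F \to \Nrv F$ with $f\circ g = i_{\bigcup F}$ and $g\circ f = i_{\Nrv F}$ in the sense recalled just before the statement of the theorem.

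The only real obstacle is the conceptual one in step two: making sure that what the paper calls a vortex nerve of closed, convex sets is genuinely an instance of the Edelsbrunner-Harer nerve, rather than a distinct combinatorial object obtained from the vortex's bridges or nesting. Once that identification is spelled out, the conclusion is immediate from the cited theorem, which is doing all of the actual topological work (via the classical Nerve Theorem).
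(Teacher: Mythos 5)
Your proposal is correct and takes essentially the same approach as the paper: both arguments do no topological work of their own and simply reduce the claim to Theorem~\ref{theorem:EHnerve} after observing that $F$ is a finite collection of closed, convex sets. The only difference is organizational --- the paper splits the reduction into four cases (1-cycle nerves, homotopic 1-cycle nerves, vortex nerves, homotopic vortex nerves), each of which is the same invocation of the cited theorem, whereas you collapse them into a single uniform application and, usefully, make explicit the one point the paper leaves implicit, namely that the nesting and bridge structure of the vortex is irrelevant once $F$ is viewed purely as a finite family of closed convex sets in the sense of Definition~\ref{def:EHnerve}.
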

\begin{proof}
There are a number of cases to consider, namely,
\begin{compactenum}[{\bf Case}.1$^o$]
\item {\bf 1-cycle nerve}: Let $\Nrv F$ be a finite collection of closed, convex 1-cycles in Euclidean space, {\em i.e.}, 
\[
\Nrv Cyc F = \left\{\cyc E\subseteq F: \bigcap \cyc E\neq \emptyset\right\}.
\] 
From Theorem~\ref{theorem:EHnerve}, $\Nrv Cyc F$ and the union of the 1-cycles in $F$ have the homotopy type. 
\item {\bf homotopic 1-cycle nerve}: After replacing $\cyc E$ with $\hcyc E$, the proof for this case is symmetric with {\bf Case}.1$^o$.
\item {\bf vortex nerve}: Let $\Nrv F$ be a finite collection of closed, convex, nested, intersecting vortexes in Euclidean space.  After replacing $\cyc E$ with $\vor E$, the proof for this case is symmetric with {\bf Case}.1$^o$, {\em i.e.}, 
\[
\Nrv Vor F = \left\{\vor E\subseteq F: \bigcap \vor E\neq \emptyset\right\}.
\] 
From Theorem~\ref{theorem:EHnerve}, $\Nrv Vor F$ and the union of the vortexes $\vor E$ in $F$ have the homotopy type. 
\item {\bf homotopic vortex nerve}: After replacing $\vor E$ with $\hvor E$ in $\Nrv Vor F$ in {\bf Case}.3$^o$, the proof for this case is symmetric with {\bf Case}.3$^o$.
\end{compactenum}
\end{proof}

\section{Application}
An application of homotopic vortex nerves is given in term of the persistence of descriptively proximal video frame shapes. The motivation for considering free group presentations of such nerves is that we can then describe frame shapes in terms of the Betti numbers of the vortex nerves that lie within the interior of the frame shapes.  A Betti number is a count of the number generators in a free group~\cite[\S 4,p. 24]{Munkres1984}. Frame shapes are, for example, descriptively close, provided the difference between the Betti numbers of the free group presentations of the corresponding homotopic vortex nerves, is close.  Determining the persistence of frame shapes then reduces to tracking the appearance, disappearance and possible reappearance of the shapes in terms of their recurring Betti numbers.  For more about this, see~\cite{Peters2020AMSBullRibbonComplexes}.
\qquad\textcolor{blue}{\Squaresteel}

\begin{appendix}\label{appDetails}
%{\color{blue}
\section{Cell Complexes}\label{ap:CW}
A planar Whitehead cell complex $K$~\cite{Peters2020CGTPhysics} (usually called a CW complex) is a collection of n-dimensional minimal cells $e^n_{\alpha}, n\in \{0,1,2\}$, {\em i.e.},
\[
K = \left\{e^n_{\alpha}\subset \mathbb{R}^2: n\in \{0,1,2\}\right\}.
\]
in the Euclidean plane $\pi$.

\begin{definition}
A cell subcomplex $\sh E\assign\left\{e^n_{\alpha}\right\}\in 2^K$ ({\bf shape complex}) is a {\bf closed subcomplex}, provided the subcomplex includes both a nonempty interior (denoted by $\Int(e^n_{\alpha})$) and its boundary (denoted by $\bdy(e^n_{\alpha})$). In effect, $\sh E$ is closed, provided
\[
\sh E = \Int(\sh E)\cup \bdy(\sh E)\ \mbox{({\bf Closed subcomplex})}.
\]
\end{definition}

\noindent Let $2^{\pi}$ be the collection of all subsets in the Euclidean plane $\pi$.  In the plane, a Whitehead {\bf Closure-finite Weak (CW)} cell complex $K\in 2^{\pi}$ has two properties, namely,
\begin{description}
\item [{\bf C}] A cell complex $K$ is {\bf closure-finite}, provided each cell $e^n_{\alpha}\in K$ is contained in a finite subcomplex of $K$.  In addition,
each cell $e^n_{\alpha}\in K$ has a finite number of immediate faces.  One cell $e^n_{\alpha}$ is an {\bf immediate face} of another cell $e^m_{\alpha}$, provided $e^n_{\alpha}\cap e^m_{\alpha}\neq \emptyset$~\cite{Switzer2002CWcomplex} (also called a {\bf common face}).
\item [{\bf W}] The plane $\pi$ has a {\bf weak topology} induced by cell complex $K$, {\em i.e.}, a subset $S\in 2^{\pi}$ is closed, if and only if $S\cap e^n_{\alpha}$ is also closed in $e^n_{\alpha}$ for each $n, \alpha$~\cite[\S 5.3, p. 65]{Switzer2002CWcomplex}.
\end{description}

A collection $K\in 2^{\pi}$ is called a {\bf CW complex}, provided it has the closure-finite property and $\pi$ has the weak topology property induced by $K$.

Minimal cell planar complexes are given in Table~\ref{tab:skeleton}.

\begin{table}[!ht]\scriptsize
\caption{Minimal Planar Cell Complexes}
\label{tab:skeleton}
%\toprule
\begin{tabular}{|c|c|c|c|}
%\toprule
    %\hline
    \hline
    Minimal Complex & Cell $e^n: n\in \{0,1,2\}$  & Planar Geometry & Interior\\
    \hline
    \hline
\begin{pspicture}
%[showgrid=true]
(0,0)(1,1)
\psdots[dotstyle=o, linewidth=1.2pt,linecolor = black, fillcolor = black]%
(0.5,0.5)	
\end{pspicture}		
&
$e^0$
&
Vertex
&
nonempty
\\
    \hline
    \hline
\begin{pspicture}
%[showgrid=true]
(0,0)(1,1)
\psline[showpoints=true,linestyle=solid,linecolor = black]%
(0.25,0.25)(0.75,0.75)
\psdots[dotstyle=o, dotsize=1.3pt 2.25,linecolor = black, fillcolor = black]%
(0.25,0.25)(0.75,0.75)
\end{pspicture}&
$e^1$
&
Edge
&
line segment w/o end points
\\
\hline
\hline
\begin{pspicture}
%[showgrid=true]
(0,0)(1,1)
\psline*[linecolor = green!50]%
(0.0,0.15)(0.85,0.85)(0.95,0.25)
\psline[linecolor = black]%
(0.0,0.15)(0.85,0.85)(0.95,0.25)(0.0,0.15)
\psdots[dotstyle=o, dotsize=1.3pt 2.25,linecolor = black, fillcolor = black]%
(0.0,0.15)(0.85,0.85)(0.95,0.25)
\end{pspicture}&
$e^2$
&
Filled triangle
&
nonempty triangle interior w/o edges
\\
\hline
%\bottomrule
\end{tabular}
\end{table}

\begin{remark}
Closure finite cell complexes with weak topology (briefly, CW complexes) were introduced by J.C.H. Whitehead~\cite{Whitehead1939homotopy}, later formalized in~\cite{Whitehead1949BAMS-CWtopology}.  In this work\footnote{Here, we use $cl(e^n)$ (closure of a cell) and $\bdy(e^n)$ (contour of a cell), instead of Whitehead's $\bar{e^n}$ and $\partial(e^n)$.}, a {\bf cell complex} $K$ (or complex)~\cite[\S 4, p. 221]{Whitehead1949BAMS-CWtopology} is a Hausdorff space (union of disjoint open cells $e,e^n,e_i^n$) such that the closure of an $n$ cell $e^n\in K$ (denoted by $\cl(e^n)$ is the image of a map $f:\sigma^n\to \cl(e^n)$, where $\sigma^n$ is a fixed $n$-simplex and where the boundary $\bdy(e^n)$ (otherwise known as the {\bf contour} of a complex) is defined by
\[
\bdy(e^n) = \overbrace{f(\bdy(e^n)) = \cl(e^n) - \Int(e^n).}^{\mbox{\textcolor{blue}{\bf Complex contour $\to$ closure $\cl(e^n)$ minus $Int(e^n)$ interior}}}
\]
Notice that a subcomplex $X\subset K$ has the weak topology, since $X$ is the union of a finite number intersections $X\cap \cl(e)$ for single cells $e\in K$~\cite[\S 5, p. 223]{Whitehead1949BAMS-CWtopology}.  From a geometric perspective, a cell complex is a triangulation of the CW space $K$~\cite[p. 246]{Whitehead1939homotopy}.
\qquad \textcolor{blue}{\Squaresteel}
\end{remark}

%\section{Homotopy Type}

\end{appendix}

\bibliographystyle{amsplain}
\bibliography{NSrefs}

\end{document}